\newtheorem{theorem}{Theorem}[section]
\newtheorem{proposition}[theorem]{Proposition}
\newtheorem{corollary}[theorem]{Corollary}
\newtheorem{lemma}[theorem]{Lemma}
\newtheorem{definition}[theorem]{Definition}
\newtheorem{remark}[theorem]{Remark}
\theoremstyle{plain}
\newtheorem*{theorem*}{Theorem}
\newtheorem*{question*}{Question}
\newtheorem*{example*}{Example}
\def\dashint{\,\ThisStyle{\ensurestackMath{%
			\stackinset{c}{.2\LMpt}{c}{.5\LMpt}{\SavedStyle-}{\SavedStyle\phantom{\int}}}%
		\setbox0=\hbox{$\SavedStyle\int\,$}\kern-\wd0}\int}
\newcommand{\rn}{\mathbb{R}^n}
\newcommand{\rnm}{\mathbb{R}^{n+m}}
\newcommand{\rd}{\mathbb{R}^d}
\newcommand{\lprn}{{L}^p(\rn)}
\newcommand{\lprnmmu}{{L}^p(\rnm, \mu)}
\newcommand{\lpn}[1]{\|#1\|_{{L}^p}}
\newcommand{\lprnmmun}[1]{\|#1\|_{\lprnmmu}}
\newtheorem{example}{Example}
\newtheorem{claim}[theorem]{Claim}
\newcommand{\ra}{\rightarrow}
\newcommand{\R}{{\mathbb R}}
\newcommand{\Rnp}{{\R^{n+1}_+}}
\newcommand{\Rn}{{\R^n}}
\newcommand{\Rm}{{\R^m}}
\newcommand{\JNp}{{{\rm JN}_p}}
\newcommand{\JNq}{{{\rm JN}_q}}
\newcommand{\GRp}{{{\rm GaRo}_p}}
\newcommand{\GRinfty}{{{\rm GaRo}_\infty}}
\newcommand{\N}{{\mathbb N}}
\newcommand{\Q}{{\mathbb Q}}
\newcommand{\Z}{{\mathbb Z}}
\newcommand{\Zn}{{\Z^n}}
\newcommand{\loc}{{\rm loc}}
\newcommand{\Lone}{{L^1}}
\newcommand{\Lp}{{L^p}}
\newcommand{\Loneloc}{{L^1_\loc}}
\newcommand{\BMO}{{\rm BMO}}
\newcommand{\cD}{{\mathcal D}}
\newcommand{\cC}{{\mathcal C}}
\newcommand{\cP}{{\mathcal P}}
\newcommand{\cO}{{\mathcal O}}
\newcommand{\Opgamma}{{\cO^{p}_\gamma}}
\newcommand{\Op}{{\cO^{p}}}
\newcommand{\On}{{\cO^{n}}}
\newcommand{\Cmax}{{\cC_{\text{max}}}}
\newcommand{\Cmin}{{\cC_{\text{min}}}}
\newcommand{\cB}{{\mathcal B}}
\newcommand{\lmin}{{\ell_{\text{min}}}}
\newcommand{\lmax}{{\ell_{\text{max}}}}
\newcommand{\lab}{{\ell_{\alpha, \beta}}}
\newcommand{\labp}{{\ell_{\alpha', \beta'}}}
\newcommand{\lde}{{\ell_{\delta, \varepsilon}}}
\newcommand{\ldep}{{\ell_{\delta', \varepsilon'}}}
\newcommand{\ftil}{{\tilde{f}}}
\newcommand{\Ikm}{{I_{k,m}}}
\newcommand{\Ijm}{{I_{j,m}}}
\newcommand{\Qbar}{\bar{Q}}
\newcommand{\Ibar}{\bar{I}}
\newcommand{\cDtil}{\tilde{\cD}}
\title{A dyadic approach to weak characterizations of function spaces}
\author[Dafni]{Galia Dafni}
\address{(G.D.) Concordia University, Department of Mathematics and Statistics, Montr\'{e}al, Qu\'{e}bec, H3G 1M8, Canada}
\curraddr{}
\email{galia.dafni@concordia.ca}
\thanks{The authors were partially supported by the Natural Sciences and Engineering Research Council
(NSERC) of Canada, and the Centre de recherches math\'{e}matiques (CRM)}
\author[Shaabani]{Shahabdoddin Shaabani}
\address{(S.S.) Concordia University, Department of Mathematics and Statistics, Montr\'{e}al, Qu\'{e}bec, H3G 1M8, Canada}
\curraddr{}
\email{shahaboddin.shaabani@concordia.ca}
\begin{document}

\begin{abstract}
Weak-type quasi-norms are defined using the mean oscillation or the mean of a function on dyadic cubes, providing discrete analogues and variants of the corresponding quasi-norms on the upper half-space previously considered in the literature. Comparing the resulting function spaces to known function spaces such as $\dot{W}^{1,p}(\rn)$, $\JNp$, $\Lp$ and weak-$\Lp$ gives new embeddings and characterizations of these spaces.  Examples are provided to prove the sharpness of the results.
\end{abstract}

\maketitle

\section{Introduction}
There has been much interest in recent years in characterizing Sobolev spaces without using derivatives, and via weak-type norms.  We refer the reader to the exposition of these developments in the papers of Dom\'inguez and Milman \cite{Oscar} and Frank \cite{Frank}, and the references therein.  The work described here was motivated by the results in these papers, with the aim of applying dyadic analysis techniques to prove analogues in the discrete setting.  Such techniques not only simplify the proofs but also lead to new embeddings for known function spaces.

The function spaces concerned involve either the means or the mean oscillations of a locally integrable function $f$.  For $f\in L^1_{loc}(\rn,\mu)$, and any set $E$ of positive and finite measure $\mu(E)$, the mean of $f$ and the mean oscillation of $f$ on $E$ are given by
$$f_E:= \fint_E f d\mu := \frac{1}{\mu(E)} \int_E f\quad \mbox{and} \quad O(f,E) := \fint_E |f - f_E| d\mu.$$
Restricting $\mu$ to Lebesgue measure and $1\le p<\infty$,  we consider, as in \cite{Frank}, the function $m(f)$ and the measure $\nu_p$ on the upper half space $\mathbb{R}^{n+1}_+$ defined by
\[
m(f)(x,r):=O(f,B(x,r)),\quad d\nu_p(x,r):=\frac{dxdr}{r^{p+1}},\quad x\in \rn, \quad r>0.
\]
In \cite[Theorem 1]{Frank} it was shown that for $1<p<\infty$, $f$ is in the homogeneous Sobolev space  $\dot{W}^{1,p}(\rn)$ if and only if $m(f)$ is in the  weak-$\Lp$ space $L^{p,\infty}(\Rnp, \nu_p)$, and
\begin{equation}\label{frankresult}
	\|\nabla f\|_{\lprn}\approx \|m(f)\|_{L^{p,\infty}(\Rnp, \nu_p)}.
\end{equation}
Moreover,
$$c_{n,p}\|\nabla f\|^p_{\lprn}=\lim_{\lambda \ra 0} \lambda^p \nu_p(\{(x,t)\in \Rnp: |m(f)(x,t)| > \lambda\}).$$
In \eqref{frankresult} and below, for a measurable function $F$ on a measure space $(X,\nu)$, we use the notation
$$\|F\|_{L^{p,\infty}(X, \nu)}:= \left(\sup_{\lambda > 0} \lambda^p \nu(\{x\in X: |F(x)| > \lambda\})\right)^{\frac 1 p}.$$
We also use the notation $A \approx B$ for quantities $A, B$ with $A \lesssim B$ and $B \lesssim A$, where $A \lesssim B$ means there is a constant $c$ such that $A \le cB$.

In this paper we approach the notions in \cite{Frank} from a discrete point of view.  Such discrete analogues were previously considered in work of Rochberg and Semmes \cite{RS}, and Semmes together with Connes, Sullivan and Teleman \cite[Appendix]{Connes}, in connection with the weak Schatten-class condition on commutators $[f,T]$.  For Calder\'on-Zygmund singular integral operators $T$, the boundedness of such commutators on $L^p$, $1 < p < \infty$, is equivalent to $f$ having bounded mean oscillation (BMO) as defined in \cite{JN}, namely $\|f\|_{\BMO}: = \sup_{\mbox{\tiny cubes } Q} O(f, Q) < \infty$, 
and the compactness is equivalent to $f$ having vanishing mean oscillation (VMO).  A description of the problem and its history can be found in \cite[Section 1.1]{Frank}; see also recent results by Frank, Sukochev and Zanin \cite{FSZ}.

Let $\cD$ be the collection of all dyadic cubes in $\rn$. For $f\in L^1_{loc}(\rn)$, taking the sequence $\left\{O(f,Q)\right\}_{Q\in \cD}$ given by the mean oscillation of $f$ on dyadic cubes,
one can define the weak-type quasi-norm for $\gamma_1,\gamma_2 \in \R$ and $1 \le p < \infty$ by
\begin{equation}
\label{Opgamma}
\|f\|_{\cO^p_{\gamma_1,\gamma_2}(\rn)}:= \left(\sup_{\lambda>0}\lambda^p\sum_{\ell(Q)^\frac{\gamma_1}{p}|O(f,Q)|>\lambda}\ell(Q)^{-\gamma_2}|Q|\right)^{\frac{1}{p}},
\end{equation}
where $\ell(Q)$ denotes the sidelength of $Q$.  

The case $\gamma_1 = 0$, $\gamma_2 = p$, which we denote by $\|f\|_{\Op}$, can be thought of as the dyadic version of the weak norm on the right-hand side of \eqref{frankresult}, by considering a tiling of the upper half-space by the upper Carleson boxes $\{S(Q)\}_{Q\in \cD}$, $S(Q) = Q \times (\ell(Q), 2\ell(Q)]$, and noting that $\nu_p(S(Q)) \approx \ell(Q)^{-p}|Q|$.  
The inequality
\begin{equation}
\label{Opmf}
\|f\|_{\Op(\rn)}\lesssim \|m(f)\|_{L^{p,\infty}(\Rnp, \nu_p)},\quad 1\le p<\infty,
\end{equation}
holds (see Proposition~\ref{prop-contdyadic}) but the two quasi-norms are not equivalent, as can be seen by considering the function $f=\chi_{\mathbb{R}_+\times \mathbb{R}^{n-1}}$ which has 
$\|m(f)\|_{L^{p,\infty}(\Rnp, \nu_p)} > 0 = \|f\|_{\Op(\rn)}$.  

As shown in Proposition~\ref{prop-contdyadic}, the reverse inequality in \eqref{Opmf} holds if one replaces the classical dyadic grid by $n+1$ dyadic systems, allowing one to recover the embeddings of $\Op(\rn)$ from the classical Sobolev embeddings via \eqref{frankresult}.  In Section~\ref{Section1} we show that it is possible to obtain certain weak Sobolev-Poincar\'e and Moser-Trudinger inequalities for functions in $\Op$ by purely dyadic arguments not involving derivatives.

Inspired by a question of Oscar Dom\'inguez, in Section~\ref{Section2} we consider variants of these spaces corresponding to the case $\gamma_1 = \gamma_2 = \gamma$ in \eqref{Opgamma}, which we denote by $\Opgamma$, and show (see Theorem~\ref{embeddings}) the embedding
\begin{equation}
\label{JNOp}
\JNp(\Rn, \cD) \subset \Opgamma(\Rn), \gamma \not\in [0,n], 1 < p<\infty,
\end{equation}
as well as the corresponding embedding of $\JNp(\Rn)$ into the analogue of $\Opgamma$ defined by a weak-type norm in the upper half-space (see Corollary~\ref{contembeddings}).  The notation here refers to the John-Nirenberg space $\JNp(\rn)$, $1<p<\infty$, which consists of all locally integrable functions $f$ such that
\[
\|f\|_{\JNp(\rn)}:=\left(\sup_{\cP}\sum_{Q\in\cP}|Q|O(f,Q)^p\right)^\frac{1}{p}<\infty,
\]	
where the supremum is taken over all disjoint collections of cubes $\cP$.  If the collections are restricted to dyadic cubes, we denote the resulting space by $\JNp(\rn,\cD)$.  Analogously, for a cube $Q_0$, we can define $\JNp(Q_0)$ (resp.\ $\JNp(Q_0,\cD(Q_0))$) by restricting to disjoint collections of subcubes (resp.\ dyadic subcubes) of $Q_0$.  The definition of this space is motivated by a result of John and Nirenberg, \cite[Lemma 3]{JN}, which gives the embedding
\begin{equation}
\label{JN}
\JNp(Q_0,\cD) \subset L^{p,\infty}(Q_0).
\end{equation}

The embedding \eqref{JNOp} is new and completely independent of \eqref{JN}, as there are no containment relations between $\Opgamma$ and weak-$L^p$, and in fact $\Opgamma$ is not contained in weak-$L^q$ for any $q > 1$, nor is it rearrangement invariant.  Moreover, the embedding in  \eqref{JNOp} is proper and is not valid for $\gamma \in [0,n]$.  These claims are shown by means of examples in Section~\ref{Examples}.  Since $L^p$ is  contained in $\JNp$,  and this inclusion is proper, as shown in \cite{DHKY}, \eqref{JNOp} implies, and is strictly stronger than, the embedding $L^p \subset \Opgamma$.

A characterization of $\Lp(\Rn)$, $1 \le p < \infty$, namely two-sided embeddings, is also provided in this paper by means of a variant of the definition of the weak-type quasi-norms in \eqref{Opgamma}, where the mean oscillation $O(f,Q)$ is replaced by the mean $f_Q$.  This characterization is valid for certain values of the parameter $\gamma$ and a measure $\mu$ which can be more general than Lebesgue measure (see Theorem~\ref{embeddings} and Theorem~\ref{cubelptheorem}) and gives a range of discrete analogues of \cite[Theorem 1.1]{Oscar} in the case of averages over balls, without using the maximal function.  In the case of nonnegative functions, this lead (see Corollary~\ref{contembeddings}) to the continuous version of this identification, namely the characterization of $L^p$ via a weak-type norm in the upper half-space.  In Section~\ref{Biparameter}, a bi-parameter version of the discrete space is considered and once again identified with $L^p$ (see Theorem~\ref{thm-biparameter}).

The authors wish to thank Oscar Dom\'inguez for posing the question which motivated some of the research in the paper, and for interesting mathematical discussions. The authors are also grateful for the referee's comments.

\section{Weak Sobolev-Poincar\'e and Moser-Trudinger inequalities for $\Op$ }
\label{Section1}

Let $Q_0$ be a cube, $f \in \Lone(Q_0)$, and $\Op(Q_0)$ be the space obtained by taking $\gamma_1 = 0$, $\gamma_2 = p$ in \eqref{Opgamma} and replacing the collection of dyadic cubes $\cD(\Rn)$ by $\cD(Q_0)$, the collection of cubes obtained by repeatedly bisecting the sides of $Q_0$, which we will call the dyadic subcubes of $Q_0$.  As explained in the introduction, we cannot assume that functions in $\Op(Q_0)$ are differentiable.
Nevertheless, we can prove that when $1\le p<n$, the following weak Sobolev-Poincar\'e inequality holds.  
\begin{theorem}
\label{SPthm}
	Let $1\le p<n$, $p^*=\frac{np}{n-p}$, and $|Q_0|=1$. Then we have
	\begin{equation}\label{Sobolevembeding}
		\|f-f_{Q_0}\|_{L^{p^*,\infty}(Q_0)}\lesssim \|f\|_{\Op(Q_0)}.
	\end{equation}
\end{theorem}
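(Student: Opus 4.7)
The plan is to dominate $|f(x) - f_{Q_0}|$ pointwise by a dyadic potential on $Q_0$ and then establish a weak-$L^{p^*}$ estimate for that potential.

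First, for a.e.~$x \in Q_0$ the Lebesgue differentiation theorem gives $f_{Q_k(x)} \to f(x)$, where $Q_k(x) \in \cD(Q_0)$ denotes the dyadic subcube of sidelength $2^{-k}$ containing $x$. Telescoping and using $|f_{Q_{k+1}(x)} - f_{Q_k(x)}| \le 2^n O(f, Q_k(x))$ one obtains the pointwise bound
\[
|f(x) - f_{Q_0}| \le 2^n I(x), \qquad I(x) := \sum_{Q \in \cD(Q_0),\, x \in Q} O(f, Q).
\]
It therefore suffices to prove $\|I\|_{L^{p^*, \infty}(Q_0)} \lesssim \|f\|_{\Op(Q_0)}$. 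Normalizing $\|f\|_{\Op(Q_0)} = 1$, the weak-type assumption reads $\sum_{O(f,Q) > t} \ell(Q)^{n-p} \le t^{-p}$ for every $t>0$, and implies the single-cube consequence $O(f,Q) \le \ell(Q)^{-(n-p)/p}$. Since $|Q_0| = 1$, the target $|\{I > \lambda\}| \lesssim \lambda^{-p^*}$ is trivial for bounded $\lambda$, so I may restrict to $\lambda$ large.

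For large $\lambda$, I split at the critical scale $r := c_0 \lambda^{-p/(n-p)}$, writing $I = I_{\ge r} + I_{<r}$ according to $\ell(Q) \ge r$ or $\ell(Q) < r$. The single-cube bound together with geometric summation along the dyadic chain yields $I_{\ge r}(x) \le C r^{-(n-p)/p} \le \lambda/2$ uniformly in $x$ for $c_0$ large enough, so $\{I > \lambda\} \subseteq \{I_{<r} > \lambda/2\}$. To estimate the small-scale tail, I combine the scale-restricted level-set bound
\[
\sum_{\ell(Q) < r,\, O(f,Q) > t} |Q| \;\lesssim\; \min\bigl(1,\, r^p t^{-p}\bigr)
\]
with a stopping-time decomposition of $\{I_{<r} > \lambda/2\}$ into disjoint maximal dyadic cubes on which the ancestor potential $\sum_{Q' \supseteq Q,\,\ell(Q')<r} O(f,Q')$ first exceeds $\lambda/2$; combining the two via Hölder's inequality with scale-dependent weights $w_k = 2^{k\varepsilon}$ (optimized in $\varepsilon$) produces the target measure estimate $\lambda^{-p^*}$, which equals $r^n$.

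The main technical obstacle is extracting the sharp Sobolev exponent $p^*$, since a naive $L^1$ or $L^p$ Chebyshev bound on $I_{<r}$ gives only the weaker exponent $n/(n-p)$ or introduces a logarithmic loss. Overcoming this requires exploiting the nesting structure of the dyadic chain together with the full weak-type summability at every scale, which is the discrete counterpart of the Sobolev gain $\tfrac{1}{p}-\tfrac{1}{p^*}=\tfrac{1}{n}$.
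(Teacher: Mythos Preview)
Your reduction to the dyadic potential $I(x)=\sum_{Q\ni x}O(f,Q)$ and the large-scale estimate $I_{\ge r}\le\lambda/2$ are correct and clean. The gap is in the small-scale part: you never actually prove $|\{I_{<r}>\lambda/2\}|\lesssim r^n$. The claimed bound $\sum_{\ell(Q)<r,\,O(f,Q)>t}|Q|\lesssim\min(1,r^pt^{-p})$ is not justified---the $r^pt^{-p}$ part is fine, but the ``$1$'' is false in general, since the cubes with $O(f,Q)>t$ need not be disjoint and can occur at arbitrarily many scales. Moreover, the weighted H\"older scheme you describe still produces a logarithmic divergence: carrying it out leads to an integral of the form $\int_0^\infty t^{-1}\,dt$ at small $t$, regardless of how $\varepsilon$ is chosen. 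The stopping-time decomposition you mention does not supply the missing ingredient either, because maximality only yields $O(f,Q_j)>0$ on the stopped cubes, with no quantitative lower bound. In short, you have correctly identified that extracting the sharp exponent $p^*$ is the whole difficulty, but the proposal does not overcome it.

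The paper's proof avoids this obstacle entirely by a different route: rather than working with the pointwise potential, it invokes the Garsia--Rodemich characterization $\|f-f_{Q_0}\|_{L^{p^*,\infty}}\lesssim\|f\|_{\GRp(Q_0,\cD)}$, reducing the problem to the estimate $\sum_{Q\in\cP}|Q|\,O(f,Q)\lesssim|\cup\cP|^{1-1/p^*}$ over \emph{disjoint} collections $\cP$. Disjointness is the key structural gain: after slicing $\cP$ into level sets $\cP_k=\{2^k<O(f,Q)\le 2^{k+1}\}$, the elementary inequality $\sum_{Q\in\cP_k}\ell(Q)^n\le\bigl(\sum_{Q\in\cP_k}\ell(Q)^{n-p}\bigr)^{n/(n-p)}\le 2^{-kp^*}$ directly encodes the Sobolev improvement, and a standard split-and-optimize in the threshold $\eta$ finishes the argument without any logarithmic loss.
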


In the proof of the theorem we will need to use the Garsia-Rodemich characterization of weak $\Lp$.  The Garsia-Rodemich space $\GRp(Q_0)$, $1 \le p < \infty$, consists of $f \in L^1(Q_0)$ such that 
	\begin{equation}\label{GR}
	\sum_{Q\in\cP}\frac{1}{|Q|} \int_Q\int_Q |f(x) - f(y)| dx dy \le C\left(\sum_{Q\in\cP}|Q|\right)^{1/p'}, \quad \frac 1 p + \frac 1 {p'} = 1,
	\end{equation}
for all disjoint collections $\cP$ of subcubes of $Q_0$.  We denote the smallest $C$ for which this inequality holds by $\|f\|_{\GRp(Q_0)}$, and analogously define the dyadic version $\GRp(Q_0, \cD)$ by restricting to $\cP \subset \cD(Q_0)$.  Since
$$\fint_Q\fint_Q |f(x) - f(y)| dx dy \approx O(f,Q),$$
the left-hand side of \eqref{GR} can be replaced by $\sum_{Q\in\cP}|Q| O(f,Q)$.  As shown in \cite{Milman1}, an application of H\"older's inequality gives $\|f\|_{\GRp(Q_0)} \lesssim \|f\|_{\JNp(Q_0)}$.  The same inequality holds in the dyadic case.

Generalizing the one-dimensional case proved in \cite{GaRo} to $n$ dimensions, the main result of \cite{Milman1} is that 
$$L^{p,\infty}(Q_0) = \GRp(Q_0), \quad 1 < p < \infty.$$
The proof is not given in the dyadic setting, but one can replace $\GRp(Q_0)$ by $\GRp(Q_0, \cD)$.  To see this it suffices to show 
	\begin{equation}
	\label{GRdyadic}
	\|f-f_{Q_0}\|_{L^{p,\infty}(Q_0)}\lesssim \|f\|_{\GRp(Q_0, \cD)}.
		\end{equation}
In the proof given in \cite[Section 4]{ABKY} of the weak-type inequality for the embedding \eqref{JN},
$$	\|f-f_{Q_0}\|_{L^{p,\infty}(Q_0)}\lesssim \|f\|_{\JNp(Q_0)},
$$
 the key is a good-$\lambda$ inequality, Lemma 4.5, which is formulated using $K_f = \|f\|_{\JNp(Q_0)}$.  As the proof of this result relies on the Calder\'on-Zygmund decomposition, which uses only cubes in $\cD(Q_0)$, this inequality is valid  with the smaller value $K_f = \|f\|_{\JNp(Q_0,\cD)}$ . In fact, it is also valid with the even smaller $K_f =  \|f\|_{\GRp(Q_0, \cD)}$.  To see this it is only a matter of avoiding the use of H\"older's inequality at the end of the proof to obtain
$$|E_{Q_0}(\lambda)| \le  \frac{\sum |Q_k|O(f,Q_k)}{(1 - 2^nb)\lambda}  
 \le
\frac{|E_{Q_0}(b\lambda)|^{1/p'}}{(1 - 2^nb)\lambda}\|f\|_{\GRp(Q_0, \cD)}.$$

We are now ready to prove Theorem~\ref{SPthm}.
\begin{proof}[Proof of  Theorem~\ref{SPthm}]
	Let us normalize $f$ and assume that $\|f\|_{\Op(Q_0)}=1$. Then from the Garsia-Rodemich characterization of $L^{p^*,\infty}(Q_0)$, and in particular \eqref{GRdyadic}, it suffices to show that for any disjoint collection $\cP \subset \cD(Q_0)$ we have
	\[
	\sum_{Q\in \cP}|Q|O(f,Q)\lesssim |\cup\cP|^{1-\frac{1}{p^*}}.
	\]
	So, let us partition $\cP$ as 
	\[
	\cP_k=\left\{Q\in\cP| 2^k<O(f,Q)\le 2^{k+1}\right\}, \quad k\in \mathbb{Z},
	\]
	and note that we have
	\begin{equation}\label{Ck}
		\sum_{Q\in \cP}|Q|O(f,Q)\le \sum_{k\in \mathbb{Z}}2^{k+1}\sum_{Q\in {\cP}_k}\ell(Q)^n.
	\end{equation}
Now using the assumption that $\|f\|_{\Op(Q_0)}=1$ gives us
\[
\sum_{Q\in {\cP}_k}\ell(Q)^n\le\left(\sum_{Q\in {\cP}_k}\ell(Q)^{n-p}\right)^{\frac{n}{n-p}}\le \left(\sum_{O(f,Q) > 2^k}\ell(Q)^{-p}|Q|\right)^{p^*} \le 2^{-kp^*}.
\]
Breaking up the summation over $k$ in \eqref{Ck} at an arbitrary $\eta>0$, we obtain
\[
	\sum_{Q\in \cP}|Q|O(f,Q)\le2 \sum_{2^k<\eta}2^k|\cup\cP|+2\sum_{2^k\ge\eta}2^{k(1-p^*)}\lesssim \eta|\cup\cP|+\eta^{1-p^*}.
	\]
This is optimized by the choice $\eta=|\cup\cP|^{-\frac{1}{p^*}}$, which gives
	\[
	\sum_{Q\in \cP}|Q|O(f,Q)\lesssim|\cup\cP|^{1-\frac{1}{p^*}}
	\]
and finishes the proof.
\end{proof}

If in the proof of Theorem~\ref{SPthm} we had replaced the left-hand side of \eqref{Ck} by $\sum_{Q\in \cP}|Q|O(f,Q)^q$, for $q < p^*$, we would immediately get  the embedding of $\Op(Q_0)$ into $\JNq(Q_0)$.  This is not as strong as the embedding into $L^q(Q_0)$ implied by the result of the theorem, since the inclusion $L^q(Q_0) \subset \JNq(Q_0)$ is proper, as shown in \cite{DHKY}.

The case $p = \infty$ in the definition of the Garsia-Rodemich space, \eqref{GR}, makes sense, and it is shown in \cite[Lemma 2.3]{Milman2} that
$$\GRinfty(Q_0) = \BMO(Q_0).$$
Corresponding to this in the Sobolev embedding is the case $p = n$.  In the dyadic setting, the assumption that $f \in \On(Q_0)$ says that
\begin{equation}
\label{eqnlambda}
\#\{Q \in \cD(Q_0): O(f,Q) > \lambda\} \le \lambda^{-n} \|f\|^n_{\On(Q_0)}, \quad \lambda > 0.
\end{equation}
Applying this to $\lambda > \|f\|_{\On(Q_0)}$ means the number of cubes on the left-hand side is zero 
and therefore $f$ is in dyadic BMO on $Q_0$ with
$$\|f\|_{\BMO(Q_0,\cD(Q_0))}: =\sup_{Q \in \cD(Q_0)} O(f, Q) \le  \|f\|_{\On(Q_0)}.$$

In the following theorem we give a more refined version of this embedding for the case $p = n$, namely a Moser-Trudinger-type inequality.
\begin{theorem}
\label{MTthm}
Let $Q_0$ be a cube with $|Q_0|=1$. Then for $p=n$ we have
	\begin{equation}\label{trudinger}
	\|f-f_{Q_0}\|_{e^{L^{\frac{n}{n-1}}}}\lesssim \|f\|_{\On(Q_0)},
	\end{equation}
and for $n<p<\infty$ we have
\begin{equation}\label{moryineq}
	\|f-f_{Q_0}\|_{L^{\infty}(Q_0)}\lesssim \|f\|_{\Op(Q_0)}.
\end{equation}
\end{theorem}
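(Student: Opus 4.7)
The proof splits naturally into two cases corresponding to the two inequalities, of very different flavor.

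\textbf{The $L^\infty$ bound \eqref{moryineq} ($p > n$).} I would normalize $\|f\|_{\Op(Q_0)}=1$ and read the defining inequality
\[
\sum_{Q\in\cD(Q_0):\,O(f,Q)>\lambda}\ell(Q)^{n-p}\le\lambda^{-p}
\]
restricted to a single dyadic level $k$. Since $\ell(Q)^{n-p}=2^{-k(n-p)}=2^{k(p-n)}$ at level $k$ and $p>n$, the contribution of level-$k$ cubes forces the number of such cubes to be at most $\lambda^{-p}\,2^{-k(p-n)}$. Demanding this to be strictly less than one gives the pointwise bound $O(f,Q)\le 2^{-k(p-n)/p}$ for \emph{every} dyadic cube $Q$ of level $k$. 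Telescoping along the chain of ancestors $Q_0\supset Q_1(x)\supset\cdots$ via $|f_{Q_{k+1}(x)}-f_{Q_k(x)}|\le 2^n O(f,Q_k(x))$ then gives, for a.e.\ $x\in Q_0$,
\[
|f(x)-f_{Q_0}|\le 2^n\sum_{k\ge 0}O(f,Q_k(x))\le 2^n\sum_{k\ge 0}2^{-k(p-n)/p}=C_{n,p},
\]
a convergent geometric series since $p>n$.

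\textbf{The Moser--Trudinger bound \eqref{trudinger} ($p=n$).} The plan is to reduce the Orlicz estimate to an $L^q$--norm bound of the form $\|f-f_{Q_0}\|_{L^q(Q_0)}\le C q^{(n-1)/n}\|f\|_{\On(Q_0)}$ for all $q$ sufficiently large, and then expand the exponential as a power series. Indeed, once such an $L^q$ bound is in hand, writing
\[
\int_{Q_0}\exp\bigl(c|f-f_{Q_0}|^{n/(n-1)}\bigr)\,dx=\sum_{k\ge 0}\frac{c^k}{k!}\|f-f_{Q_0}\|_{L^{kn/(n-1)}(Q_0)}^{kn/(n-1)}\le\sum_{k\ge 0}\frac{c^k}{k!}\bigl(C'k^{(n-1)/n}\bigr)^{kn/(n-1)}
\]
reduces, after grouping, to a sum of the form $\sum_k A^k k^k/k!$, which by Stirling converges provided $Ae<1$, i.e.\ provided $c$ is sufficiently small depending only on $n$ and $\|f\|_{\On(Q_0)}$. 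To obtain the $L^q$ bound, I would invoke Theorem~\ref{SPthm} at an exponent $p\in[1,n)$ close to $n$, combined with the comparison $\|f\|_{\Op(Q_0)}\lesssim\|f\|_{\On(Q_0)}$ (proved by splitting the $\Op$-sum at the critical level $k_0=\log_2(1/\lambda)$ and using $m_k(\lambda)\le\min(2^{kn},\lambda^{-n})$ on each side of $k_0$). Choosing $p$ so that $p^*\approx 2q$ and then passing from the weak-$L^{p^*}$ estimate of Theorem~\ref{SPthm} to an $L^q$ estimate by the standard Marcinkiewicz inclusion on the bounded set $Q_0$ yields the claimed growth.

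\textbf{The main obstacle} is extracting the sharp exponent $(n-1)/n$ in the $L^q$-growth, since a na\"ive chaining of the three embeddings $\On\hookrightarrow\Op$, $\Op\hookrightarrow L^{p^*,\infty}$ and $L^{p^*,\infty}\hookrightarrow L^q$ produces only an exponent $1$ in $q$, which would merely recover BMO's John--Nirenberg bound $\exp L$ rather than the required $\exp L^{n/(n-1)}$. The correct exponent comes from revisiting the proof of Theorem~\ref{SPthm} to make explicit the dependence of the geometric-series constants on $p^*$ (specifically the divergent factor $(1-2^{-(n-p)})^{-1}$ arising as $p\uparrow n$) and balancing it against the $(p^*-q)^{-1/q}$ factor in the weak-to-strong step; this is the dyadic analogue of how the blow-up rate $(n-p)^{-(n-1)/n}$ of Talenti's sharp Sobolev constant produces the Moser--Trudinger exponent $n/(n-1)$ as a limiting case of the $W^{1,p}$ embeddings.
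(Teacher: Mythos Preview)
Your argument for \eqref{moryineq} ($p>n$) is correct and essentially matches the paper: both extract the level-wise bound $O(f,Q)\le \ell(Q)^{1-n/p}$ from the weak-$\ell^p$ condition and then sum along a nested chain. The paper invokes Lerner's decomposition (Lemma~\ref{lerner}) for the chain, you telescope directly via $|f_{Q_{k+1}}-f_{Q_k}|\le 2^n O(f,Q_k)$; either works.

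For \eqref{trudinger} ($p=n$) your route is genuinely different from the paper's, and as written it has a gap. The problem is not the $(1-2^{-(n-p)})^{-1}$ factor you flag, but the constant in the step $\GRp\hookrightarrow L^{p,\infty}$ hidden inside Theorem~\ref{SPthm}. That step comes from a good-$\lambda$ inequality of the form
\[
|E(\lambda)|\le \frac{\|f\|_{\mathrm{GaRo}_{p^*}}}{(1-2^n b)\lambda}\,|E(b\lambda)|^{1-1/p^*},
\]
and iterating (or solving for the weak norm) forces a factor $b^{1-p^*}/(1-2^n b)$, which is \emph{exponential} in $p^*$ no matter how $b\in(0,2^{-n})$ is chosen. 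Combined with your (correct) estimate $\|f\|_{\Op}\lesssim (n-p)^{-1/p}\|f\|_{\On}$, the chain $\On\hookrightarrow\Op\hookrightarrow L^{p^*,\infty}\hookrightarrow L^q$ produces an $L^q$-growth that is exponential in $q$, not $q^{(n-1)/n}$, and the power-series summation diverges. The analogy with Talenti's constant is misleading here: the classical Sobolev constant $S_p\approx (n-p)^{-(n-1)/n}$ is for the \emph{strong} embedding $\dot W^{1,p}\hookrightarrow L^{p^*}$, whereas Theorem~\ref{SPthm} only gives the weak target via Garsia--Rodemich, and it is precisely that last step whose constant explodes.

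The paper bypasses this entirely. It applies Lerner's Lemma~\ref{lerner} to write $|f-f_{Q_0}|\lesssim\sum_{k\ge1}\sum_{Q\in\cP_k}O(f,Q)\chi_Q$ with $|\cup\cP_k|\le 2^{-k}$, truncates at level $K$, and observes that for $x$ in the support of the first $K$ terms there is a nested chain $Q_1\supset\cdots\supset Q_K$ along which the $\On$-hypothesis gives the counting bound $\#\{k\le K:O(f,Q_k)>2^{-l}\}\le\min(2^{ln},K)$. Summing this layer-cake yields $\sum_{k=1}^K O(f,Q_k)\lesssim K^{(n-1)/n}$ directly, so $\big|\{|f-f_{Q_0}|>CK^{(n-1)/n}\}\big|\le 2^{-K}$, which is exactly the distributional form of $\exp L^{n/(n-1)}$. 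If you want to salvage your $L^q$-growth strategy, this is the computation to use in place of the SPthm chain.
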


For the proof of this theorem we will need to recall a mean oscillation inequality due to Lerner.  We give here a simpler version, found in \cite{Lerner}, of the local mean oscillation inequality.   A more general, global version is given in \cite[Theorem 10.2]{LernerNazarov}.

\begin{lemma}[Lerner]
\label{lerner}
	For $f\in L^1(Q_0)$, there exists a sequence $\{\cP_k\}$ of collections of disjoint dyadic cubes, which is contracting  in the sense that each cube in $\cP_k$ is contained in a cube in $\cP_{k-1}$, and $|\cup \cP_k|\le 2^{-k}|Q_0|$, for which the following pointwise domination holds:
	
	\[
	|f-f_{Q_0}|\lesssim \sum_{k=1}^{\infty}\sum_{Q\in\cP_k} O(f,Q)\chi_Q.
	\]
\end{lemma}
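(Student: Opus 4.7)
The plan is to build the collections $\{\cP_k\}$ by an iterated Calder\'on--Zygmund stopping-time argument on $f$ relative to the mean on each selected cube, and then obtain the pointwise bound by telescoping dyadic means. I set $\cP_0 := \{Q_0\}$ implicitly and define $\cP_k$ inductively: for each $R \in \cP_{k-1}$, let $\cP_k(R)$ be the collection of maximal dyadic subcubes $Q \subsetneq R$ on which
\[
\fint_Q |f - f_R|\,dx > A\, O(f, R),
\]
with $A := 2^{n+1}$, and set $\cP_k := \bigcup_{R\in\cP_{k-1}} \cP_k(R)$. Maximality makes $\cP_k(R)$ pairwise disjoint, disjointness across different $R$ is automatic since the $R \in \cP_{k-1}$ are already disjoint, and the contracting property is built in. The measure control is immediate from summing the defining inequality cube-by-cube: for each $R \in \cP_{k-1}$,
\[
A\, O(f,R)\sum_{Q \in \cP_k(R)} |Q| \le \sum_{Q \in \cP_k(R)} \int_Q |f - f_R|\,dx \le \int_R |f - f_R|\,dx = |R|\, O(f,R),
\]
so $|\cup \cP_k(R)| \le |R|/A \le |R|/2$, and iterating yields $|\cup\cP_k| \le 2^{-k}|Q_0|$.

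For the pointwise domination, I work on the full-measure set $Q_0 \setminus \bigcap_k \cup\cP_k$. For such $x$, let $k(x)$ be the smallest integer with $x \notin \cup\cP_{k(x)}$, and let $R_0 = Q_0, R_1, \dots, R_{k(x)-1}$ be the unique cubes in the successive layers containing $x$. Telescope:
\[
f(x) - f_{Q_0} = \bigl(f(x) - f_{R_{k(x)-1}}\bigr) + \sum_{j=1}^{k(x)-1} \bigl(f_{R_j} - f_{R_{j-1}}\bigr).
\]
For $j \ge 1$, since the dyadic parent $\hat R_j$ of $R_j$ inside $R_{j-1}$ was \emph{not} selected, $\fint_{\hat R_j} |f - f_{R_{j-1}}| \le A\, O(f, R_{j-1})$, so
\[
|f_{R_j} - f_{R_{j-1}}| \le \fint_{R_j}|f - f_{R_{j-1}}|\,dx \le 2^n \fint_{\hat R_j} |f - f_{R_{j-1}}|\,dx \le 2^n A\, O(f, R_{j-1}).
\]
The tail $|f(x) - f_{R_{k(x)-1}}|$ is controlled by the Lebesgue differentiation theorem: no dyadic subcube of $R_{k(x)-1}$ containing $x$ was selected, so every such subcube $Q$ satisfies $\fint_Q|f - f_{R_{k(x)-1}}| \le A\, O(f, R_{k(x)-1})$, and at almost every Lebesgue point of $f$ we obtain the same bound for the pointwise value. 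Summing the telescoping gives $|f(x) - f_{Q_0}| \lesssim \sum_{j=0}^{k(x)-1} O(f, R_j)\chi_{R_j}(x) = \sum_{k=0}^\infty \sum_{Q\in\cP_k} O(f,Q)\chi_Q(x)$; the $k = 0$ level $O(f, Q_0)\chi_{Q_0}$ is absorbed into the implicit constant (or the indexing shifted by one to match the statement).

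The main obstacle is the bookkeeping in the telescoping step: one must verify that each parent-to-child jump picks up a dimensional constant $2^n A$ independent of the depth $k$, that the parent $\hat R_j$ lies inside $R_{j-1}$ so that it is the non-selection at level $k = j$ (rather than at some earlier level) that gives the required mean bound, and that the tail at the stopping level is governed by the dyadic Lebesgue differentiation theorem exactly because no sub-cube containing $x$ was selected. The measure estimate $|\cup\cP_k| \le 2^{-k}|Q_0|$ is what guarantees that the stopping chain terminates a.e., making the pointwise bound meaningful almost everywhere on $Q_0$.
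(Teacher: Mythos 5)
The paper does not prove this lemma --- it cites Lerner's unpublished notes and Lerner--Nazarov --- and your iterated stopping-time construction with telescoping of dyadic means is precisely the standard argument behind the local mean oscillation decomposition; it is essentially correct, including the measure estimate, the parent-to-child bound $|f_{R_j}-f_{R_{j-1}}|\le 2^nA\,O(f,R_{j-1})$ (which holds whether $\hat R_j=R_{j-1}$ or $\hat R_j$ is an unselected proper subcube), and the tail control via the dyadic Lebesgue differentiation theorem.

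One point in your last step is wrong as phrased and should be repaired: the top-level term $O(f,Q_0)\chi_{Q_0}$ \emph{cannot} be absorbed into the implicit constant, because on $Q_0\setminus\cup\cP_1$ (a set of measure at least $|Q_0|/2$ in your construction) the sum $\sum_{k\ge1}\sum_{Q\in\cP_k}O(f,Q)\chi_Q$ vanishes while $|f(x)-f_{Q_0}|$ need not --- indeed your own tail estimate at such $x$ gives exactly the bound $A\,O(f,Q_0)$. The family must genuinely contain $Q_0$, so the correct fix is the index shift you mention (set $\cP_1=\{Q_0\}$ and let your $\cP_k$ become $\cP_{k+1}$), at the cost that the measure bound for the first generation reads $|\cup\cP_1|=|Q_0|$ rather than $2^{-1}|Q_0|$; your stronger decay $|\cup\cP_{k+1}|\le 2^{-(n+1)k}|Q_0|$ restores the stated bound for all later generations. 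This is really an imprecision in the lemma as printed rather than a gap in your argument, and it is harmless where the lemma is used in the paper, since there only the geometric decay of $|\cup\cP_k|$ and the bound $O(f,Q)\le\|f\|_{\On(Q_0)}$ for every $Q\in\cD(Q_0)$, including $Q_0$ itself, are needed.
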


\begin{proof}[Proof of Theorem~\ref{MTthm}]
	To prove \eqref{trudinger}, we may assume $\|f\|_{\On(Q_0)}=1$.  As pointed out before the statement of the theorem, this implies that $O(f, Q) \le 1$ for all $Q \in \cD(Q_0)$. 

By Lemma~\ref{lerner}, for $\lambda > 0$ and any positive integer $K$,
$$\Big\{|f-f_{Q_0}| > \lambda\Big\} \subset \Big\{\sum_{k=1}^{K}\sum_{Q\in\cP_k} O(f,Q)\chi_Q > \lambda\Big\} + \Big\{\sum_{k=K+1}^{\infty}\sum_{Q\in\cP_k} \chi_Q > 0\Big\}$$
and the measure of the second term on the right is dominated by $\sum_{k=K+1}^{\infty}|\cup \cP_k| \le 2^{-K}$.

If $x$ belongs to the first set  on the right then there is a finite sequence  of dyadic cubes $Q_1,Q_2, \ldots,Q_K$ containing $x$ with $Q_k \in \cP_k$ 
and, by \eqref{eqnlambda}, we have 
	\[
	 \#\left\{k|O(f,Q_k)>2^{-l}\right\}\le \min\left\{2^{ln},K\right\},\quad l\in \N,
	\]
	which gives us
	\[
	\sum_{k=1}^{K}O(f,Q_k)\le \sum_{2^{ln} \le K}2^{ln}2^{-l+1}   +  \sum_{2^{ln} > K} K2^{-l+1} \le CK^{\frac{n-1}{n}}.
	\]
Letting $\lambda = CK^{\frac{n-1}{n}}$, we see that the first set must be empty and therefore
		\[
	\Big|\left\{|f-f_{Q_0}|\ge CK^{\frac{n-1}{n}}\right\}\Big| \le 2^{-K}.
	\]
	Since the estimate holds for all positive integers $K$,  this completes the proof of \eqref{trudinger}. 
	
	To prove the second inequality, for $p > n$, one first applies the analogue of \eqref{eqnlambda} for cubes $Q \in \cD(Q_0)$ of a fixed length $\ell(Q)$  to observe that, assuming $\|f\|_{\Op(Q_0)}=1$, we have 
	\[
	O(f,Q)\le \ell(Q)^{1-\frac{n}{p}}.
	\]
	This implies that for any sequence of nested dyadic cubes $Q_1\supsetneq Q_2\supsetneq Q_3\supsetneq\ldots$ we have
	\[
	\sum_{k=1}^{\infty}O(f,Q_k)\le \sum_{k=1}^{\infty}\ell(Q_k)^{1-\frac{n}{p}}\le\sum_{i=0}^{\infty}2^{-i(1-\frac{n}{p})}\lesssim 1,
	\]
and an application of Lemma \ref{lerner} finishes the proof in this case.
\end{proof}

\section{Generalized weak-type spaces}
\label{Section2}

In this section we generalize the weak-type quasi-norms defined in \eqref{Opgamma}, and see how they relate to known strong-type norms. 

We start with some notation and assumptions which will hold for the remainder of the paper.  While we already introduced  the notation $\cD$ for the collection of all dyadic cubes in $\rn$, since we will be working with more general measures than Lebesgue measure, we need to specify that these cubes are taken to be half-open, namely of the form
$$Q_{k,j} = [j_12^k, (j_1+1)2^k)\times \ldots \times [j_n 2^k, (j_n+1)2^{k}), \quad k \in \Z, (j_1, \ldots, j_n) \in \Zn,$$
so that the cubes forming the grid at generation $k$ are pairwise disjoint.

Here and in the rest of the paper we assume $\mu$ is a Radon measure on $\rn$. This guarantees that cubes and balls have finite measure and the Lebesgue differentiation theorem holds.
For $1\le p<\infty$ and a real number $\gamma$, we define $l_{\gamma}^{p,\infty}(\rn,\mu)$ as the space of all sequences $a=\left\{a_Q\right\}_{Q\in \cD}$ such that 
\begin{equation}
\label{averagenorm}
	\|a\|_{l_\gamma^{p,\infty}(\rn, \mu)}:=\left(\sup_{\lambda>0}\lambda^p\sum_{|a_Q|>\lambda}\ell(Q)^{-\gamma}\mu(Q)\right)^{\frac{1}{p}} < \infty.
\end{equation}
We suppress $\mu$ in the notation when $\mu(Q) = |Q|$ is just Lebesgue measure.  We will sometimes, but only when explicitly specified, assume that $\mu$ is {\em doubling}, which means that there exists a constant $C_D$ such that
\begin{equation}
\label{doubling}
0 < \mu(2B) \le C_D \mu(B)
\end{equation} 
for all balls $B$, where $2B$ denotes the ball with the same center as $B$ and double the radius.

In Section~\ref{Section1} we considered a special case of the quasi-norm $\|a\|_{l_\gamma^{p,\infty}(\rn, \mu)}$ where the sequence $a$ was given by the mean oscillation of a function $f$ on dyadic cubes, $\mu$ was Lebesgue measure, and $\gamma = p$, namely
$$\|f\|_{\Op(\rn)} =  \|O(f)\|_{l_p^{p,\infty}(\rn)}, \quad O(f) := \left\{O(f,Q)\right\}_{Q\in \cD}.$$
We now want to consider other choices of $a$, $\mu$ and $\gamma$.  In particular, in addition to $a$ defined using mean oscillation, we also want to consider $a$ defined by the means of $f$ on dyadic cubes.

Our goal is to relate these norms to the corresponding strong norms.  Before doing this, we need a couple of lemmas. The following lemma shows that under certain restrictions on $\gamma$, the quasi-norm $\|a\|_{l_\gamma^{p,\infty}(\rn, \mu)}$ can be controlled by only considering disjoint collections of cubes.

\begin{lemma} 
\label{Prop1}
Let $a=\left\{a_Q\right\}_{Q\in \cD}$ a sequence of numbers indexed by $\cD$ and $1\le p<\infty$.  Suppose either
\begin{itemize}
	\item $\gamma<0$, or
	\item $\gamma>d>0$ and $\mu$ satisfies the additional assumption that $\mu(Q) \approx \ell(Q)^d$ for all $Q \in \cD$.
\end{itemize}
Then
\begin{equation}
	\|a\|_{l_\gamma^{p,\infty}(\rn, \mu)} \lesssim \left(\sup_{\lambda>0}\lambda^p\ \sup_\cP \sum_{Q \in \cP, |a_Q|>\lambda}\ell(Q)^{-\gamma}\mu(Q)\right)^{\frac{1}{p}},
\end{equation}
where the supremum on the right is taken over all disjoint subcollections $\cP \subset \cD$.
\end{lemma}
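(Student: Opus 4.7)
Write $w(Q) := \ell(Q)^{-\gamma}\mu(Q)$ and denote the right-hand side, raised to the $p$th power, by $S := \sup_{\lambda > 0} \lambda^p \sup_\cP \sum_{Q \in \cP,\ |a_Q|>\lambda} w(Q)$, which we may assume is finite. Fixing $\lambda > 0$ and setting $\mathcal{A}_\lambda := \{Q \in \cD : |a_Q| > \lambda\}$, it suffices to prove $\sum_{Q \in \mathcal{A}_\lambda} w(Q) \lesssim S/\lambda^p$, after which taking the supremum in $\lambda$ gives the claim. In both hypothesis cases the strategy is to reduce the sum over $\mathcal{A}_\lambda$ to a sum over a disjoint subcollection by grouping cubes along vertical chains in $\cD$ and summing a geometric series whose ratio is determined by the sign of $\gamma$ (or $\gamma - d$).

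\emph{Case $\gamma < 0$.} I would let $\mathcal{M} \subset \mathcal{A}_\lambda$ be the maximal elements of $\mathcal{A}_\lambda$; by the dyadic nesting dichotomy, $\mathcal{M}$ is a disjoint subfamily. Every $Q \in \mathcal{A}_\lambda$ with $\mu(Q) > 0$ must be contained in some element of $\mathcal{M}$, since otherwise an infinite ascending chain in $\mathcal{A}_\lambda$ would produce arbitrarily large singleton weights (as $\ell^{-\gamma}\to\infty$ with $\mu$ monotone), contradicting $S < \infty$. For each $Q^* \in \mathcal{M}$, partitioning $Q^*$ into its dyadic subcubes at generation $k$ and using that they partition $Q^*$ gives
\[
\sum_{Q \in \cD,\ Q \subseteq Q^*} w(Q) = \sum_{k \ge 0} (2^{-k}\ell(Q^*))^{-\gamma}\mu(Q^*) = w(Q^*)\sum_{k\ge 0} 2^{k\gamma} = \frac{w(Q^*)}{1 - 2^\gamma},
\]
where convergence comes from $\gamma < 0$. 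Summing over the disjoint family $\mathcal{M}$ then yields $\sum_{Q\in\mathcal{A}_\lambda} w(Q) \lesssim \sum_{Q^*\in\mathcal{M}} w(Q^*) \le S/\lambda^p$.

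\emph{Case $\gamma > d > 0$, $\mu(Q) \approx \ell(Q)^d$.} Here $w(Q) \approx \ell(Q)^{d-\gamma}$ grows as $\ell(Q)$ shrinks, so the symmetric strategy is to use \emph{minimal} cubes. Applying the RHS bound to a singleton $\cP = \{Q\}$ gives $w(Q) \le S/\lambda^p$ for every $Q \in \mathcal{A}_\lambda$, and since $d - \gamma < 0$ this forces a uniform positive lower bound on $\ell(Q)$; consequently every $Q \in \mathcal{A}_\lambda$ contains a minimal element. Let $\mathcal{M}$ be the collection of minimal cubes, again disjoint by nesting. Assigning each $Q \in \mathcal{A}_\lambda$ to some chosen $Q^* \in \mathcal{M}$ with $Q^* \subseteq Q$, I would bound along the ancestor chain of $Q^*$
\[
\sum_{Q \in \cD,\ Q \supseteq Q^*} w(Q) \approx \sum_{k \ge 0}(2^k \ell(Q^*))^{d-\gamma} = \frac{\ell(Q^*)^{d-\gamma}}{1 - 2^{d-\gamma}} \approx w(Q^*),
\]
using $\mu \approx \ell^d$ on each ancestor and the convergence $2^{d-\gamma} < 1$. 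Summing over $\mathcal{M}$ gives the same conclusion $\sum_{Q\in\mathcal{A}_\lambda}w(Q) \lesssim S/\lambda^p$.

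The main technical obstacle I expect is correctly justifying the existence of maximal (respectively minimal) cubes when $\mathcal{A}_\lambda$ may contain arbitrarily large (respectively small) elements; in each case finiteness of $S$, applied to singleton collections, rules out the obstructing infinite chains. The exclusion $\gamma \notin [0,d]$ is precisely what makes both the singleton weight bound and the above geometric series converge in the required direction.
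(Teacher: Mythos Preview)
Your proof is correct and follows essentially the same route as the paper: maximal cubes and a geometric series in $2^{\gamma}$ for $\gamma<0$, minimal cubes and a geometric series in $2^{d-\gamma}$ for $\gamma>d$. The one technical difference is that the paper sidesteps the existence issue for maximal/minimal cubes by first restricting to an arbitrary \emph{finite} subcollection $\cC\subset\{Q:|a_Q|>\lambda\}$ (where such cubes trivially exist), obtaining a bound independent of $\cC$, and then taking the supremum over finite $\cC$; you instead work directly with the full level set $\mathcal{A}_\lambda$ and use finiteness of $S$ on singletons to rule out infinite ascending/descending chains. Both are valid, and your justification of this point is handled correctly.
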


\begin{proof}
Let $\cC$ be a finite collection of dyadic cubes.  Then we can associate to $\cC$ two subcollections $\Cmax$ and $\Cmin$.  Here $\Cmax$ is the collection of all maximal cubes in $\cC$, namely those cubes which are not contained in any larger element of $\cC$.  Similarly, $\Cmin$ is the collection of all minimal cubes, which are those for which none of their subcubes is in $\cC$.  By the properties of the dyadic system, each of these collections consists of disjoint cubes.

\noindent
Case 1:  $\gamma<0.$\\
		 Note that every cube in $\cC$ is contained in some maximal cube. Therefore, denoting by $\cD_k(Q)$ the collection of $k$-th generation subcubes of $Q$, we have
	\begin{align*}
		&\sum_{Q\in {\cC}}\ell(Q)^{-\gamma}\mu(Q)
		\le \sum_{Q\in \Cmax}\sum_{k=0}^{\infty}\sum_{Q'\in \cD_k(Q)}\ell(Q')^{-\gamma}\mu(Q') \\ &\le \sum_{Q\in \Cmax}\mu(Q)\ell(Q)^{-\gamma}\sum_{k=0}^{\infty}2^{k\gamma}\lesssim\sum_{Q\in \Cmax}\mu(Q)\ell(Q)^{-\gamma}.
	\end{align*}

\noindent
Case 2: $\gamma>d>0$ and $\mu(Q) \approx \ell(Q)^d$ for all $Q \in \cD$.\\
 Note that each cube in $\cC$ contains a minimal cube in $\Cmin$ and thus
	\begin{align*}
		& \sum_{Q\in {\cC}}\ell(Q)^{-\gamma}\mu(Q) \lesssim
	\sum_{Q\in {\cC}}\ell(Q)^{d-\gamma} \le \sum_{Q\in \Cmin}\sum_{Q' \in \cC, Q\subset Q'}\ell(Q')^{d-\gamma} \\
	&\le \sum_{Q\in \Cmin}\ell(Q)^{d-\gamma}\sum_{k=0}^{\infty}2^{k(d-\gamma)}\lesssim\sum_{Q\in \Cmin}\mu(Q)\ell(Q)^{-\gamma}.
	\end{align*}

Now fixing $\lambda > 0$ and letting $\cC$ be an arbitrary finite sub-collection of dyadic cubes $Q$ such that $|a_Q|>\lambda$, we see that in the two cases above we get
$$\lambda^p \sum_{Q\in {\cC}}\ell(Q)^{-\gamma}\mu(Q)\lesssim \lambda^p \sup_\cP \sum_{Q \in \cP, |a_Q|>\lambda}\ell(Q)^{-\gamma}\mu(Q).$$
Taking the supremum on the left-hand side over all finite subcollections $\cC$ and over $\lambda > 0$ gives the result of the lemma.
\end{proof}

The second lemma will help us deal with some the values of $\gamma$ not covered in Lemma~\ref{Prop1}.

\begin{lemma}
\label{cubecollectionlemma}
	Let $1<q<\infty$, $\gamma \neq 0$ and $\cC$ be finite collection of dyadic cubes in $\rn$. Then 	
\[
\|\sum_{Q\in \cC}\ell(Q)^{-\frac{\gamma}{q}}\chi_Q\|_{L^{q}(\rn,\mu)}\lesssim \left(\sum_{Q\in\cC}\ell(Q)^{-\gamma}\mu(Q)\right)^{\frac{1}{q}}.
\]
\end{lemma}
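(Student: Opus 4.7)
The plan is to pass from the $L^q$ norm to a pointwise estimate of the function $F(x) := \sum_{Q\in\cC}\ell(Q)^{-\gamma/q}\chi_Q(x)$ and then integrate. The key observation is that, at any fixed point $x$, the cubes of $\cC$ containing $x$ form a nested chain of dyadic cubes, with at most one cube of each given sidelength (since distinct dyadic cubes of equal sidelength are disjoint). Writing $K(x) := \{k \in \Z : \text{there exists } Q \in \cC \text{ with } x\in Q \text{ and } \ell(Q)=2^k\}$, this gives
\[
F(x) = \sum_{k \in K(x)} 2^{-k\gamma/q}.
\]

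The next step is to dominate this finite geometric sum by its largest term. If $\gamma > 0$ the terms $2^{-k\gamma/q}$ decrease in $k$, so the dominant term corresponds to $k_0(x) := \min K(x)$, equivalently, to $Q^{0}(x)$, the \emph{smallest} cube of $\cC$ containing $x$. If $\gamma < 0$, the terms increase in $k$, so the dominant term comes from $k_1(x) := \max K(x)$, that is, from $Q^{1}(x)$, the \emph{largest} cube of $\cC$ containing $x$. In either case, summing the geometric series yields
\[
F(x) \lesssim \ell(Q^{\ast}(x))^{-\gamma/q},
\]
where $Q^{\ast}(x)$ denotes $Q^{0}(x)$ or $Q^{1}(x)$ according to the sign of $\gamma$, with an implicit constant depending only on $|\gamma|/q$. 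This is precisely the point where the hypothesis $\gamma \neq 0$ enters, since for $\gamma = 0$ the series $\sum_{k \in K(x)} 2^{-k\gamma/q}$ is comparable to the counting measure of $K(x)$, which need not be bounded by any single term.

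To conclude, I would raise to the $q$-th power and integrate, splitting $\rn$ according to which $Q\in\cC$ serves as $Q^{\ast}(x)$. Since the sets $\{x : Q^{\ast}(x) = Q\}$, $Q \in \cC$, are pairwise disjoint subsets of the corresponding $Q$,
\[
\|F\|_{L^q(\rn,\mu)}^q \lesssim \int_{\rn} \ell(Q^{\ast}(x))^{-\gamma}\, d\mu(x) \le \sum_{Q \in \cC}\ell(Q)^{-\gamma}\mu(Q),
\]
and taking $q$-th roots gives the claim. The only real subtlety is having to treat the two signs of $\gamma$ separately and select the appropriate extremal cube in each case; once this is done, everything reduces to a one-line geometric-series estimate, and the exponent $q$ plays no role beyond being positive.
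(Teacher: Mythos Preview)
Your proof is correct and follows essentially the same approach as the paper's: both argue that at a fixed point the cubes of $\cC$ containing $x$ form a nested chain with distinct sidelengths, bound the resulting geometric series by its extremal term (smallest cube when $\gamma>0$, largest when $\gamma<0$), and then integrate using the disjointness of the sets on which a given $Q\in\cC$ serves as the extremal cube. The only difference is organizational: the paper first stratifies $\cC$ into ``generations'' $\cP_1,\cP_2,\ldots$ of successively maximal cubes and partitions $\cup\cC$ into the layers $E_k=\cup\cP_k\setminus\cup\cP_{k+1}$, whereas you work directly with the pointwise map $x\mapsto Q^{\ast}(x)$; these are two ways of indexing the same partition, and your version is arguably the more economical.
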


\begin{proof}
	First, we partition the collection $\cC$ into a sequence of generations $\cP_1, \cP_2, \ldots$, each of which is a disjoint collection, such that for $j<k$, each cube in the $k$-th generation, $\cP_k$, is contained in a unique cube in the $j$-th generation, $\cP_j$. To do this, simply let $\cP_1 = \Cmax$, the subcollection of maximal cubes in $\cC$, and suppose we have constructed $\cP_j$ for $j<k$. Then the $k$-th generation $\cP_k$ consists of maximal cubes in $\cC\setminus\cup_{j<k} \cP_j$. Now, let 
	\[
	g(x)=\sum_{Q\in \cC}\ell(Q)^{-\frac{\gamma}{q}}\chi_Q(x),
	\]
	 and partition $\cup\cC$ into the sets $E_1, E_2,\ldots$, where
	\[
	E_k=\cup\cP_k \setminus \cup\cP_{k+1}.
	\]
	Here we note that for each $x\in E_k$ there is a unique sequence of cubes $Q_1\supset Q_2\supset\ldots\supset Q_k$, with the property that $Q_j\in \cP_j$ for $1\le j\le k$, and thus
	\begin{equation}\label{geometricseries}
			g(x)=\sum_{j=1}^{k}\ell(Q_j)^{-\frac{\gamma}{q}}.
	\end{equation}

Depending on the sign of $\gamma$, the terms in the above series decay or increase geometrically. If $\gamma<0$, the exponent is positive so the largest cube dominates and we have $g(x)\lesssim \ell(Q_1)^{-\frac{\gamma}{q}}$, which implies
		\[
		g(x)\lesssim\sum_{Q_1\in \cP_1}\ell(Q_1)^{-\frac{\gamma}{q}}\chi_{Q_1}(x).
		\]
	Taking the $L^{q}(\rn,\mu)$ norm of the above inequality yields
		\[
		\|g\|_{L^{q}(\rn,\mu)}\lesssim\left(\sum_{Q_1\in \cP_1}\ell(Q_1)^{-\gamma}\mu(Q_1)\right)^{\frac{1}{q}}\lesssim \left(\sum_{Q\in \cC}\ell(Q)^{-\gamma}\mu(Q)\right)^{\frac{1}{q}},
		\]
		which proves the claim in this case.\\
		
		Suppose now that $\gamma>0$. Then from \eqref{geometricseries} we get $g(x)\lesssim \ell(Q_k)^{-\frac{\gamma}{q}}$, and thus 
		\[
		g(x)\lesssim \sum_{Q_k\in \cP_k}\ell(Q_k)^{-\frac{\gamma}{q}}\chi_{Q_k}(x), \quad \quad x\in E_k.
		\]
	Raising both sides to the power $q$ and integrating the above inequality over $E_k$ gives us
		\[
		\int_{E_k}g^{q}d\mu\lesssim\sum_{Q_k\in \cP_k}\ell(Q_k)^{-{\gamma}}\mu(Q_k).
		\]
	Finally, we note that the support of $g$ is the disjoint union of the sets $E_k$, and by summing the previous inequality over $k$, we obtain
		\[
		\|g\|_{L^{q}(\rn,\mu)}=\left(\sum_{k}\int_{E_k}g^{q}d\mu\right)^{\frac{1}{q}}\lesssim\left(\sum_{Q\in \cC}\ell(Q)^{-{\gamma}}\mu(Q)\right)^{\frac{1}{q}},
		\]
		which completes the proof.
\end{proof}

We are now ready to state and prove our embeddings.
\begin{theorem}
\label{embeddings}
Let $f\in \Loneloc(\rn, \mu)$, $1 \le p < \infty$. Set
$$a(f)_Q:= \ell(Q)^{\frac{\gamma}{p}}\fint_{Q} f d\mu = \ell(Q)^{\frac{\gamma}{p}}\frac{1}{\mu(Q)}\int_{Q} f d\mu \quad \mbox{if } \mu(Q) > 0, \quad a(f)_Q = 0 \mbox{ otherwise}$$
and
$$b(f)_Q:= \ell(Q)^{\frac{\gamma}{p}}O(f,Q).$$

\noindent
{\rm (i)} 
For $\mu$, $p$ and $\gamma$ satisfying the hypotheses of Lemma~\ref{Prop1}, or the hypotheses of Lemma~\ref{cubecollectionlemma},
we have
\begin{equation}
\label{thefirst}
		\|a(f)\|_{l_\gamma^{p,\infty}(\rn, \mu)}\lesssim \|f\|_{\Lp(\rn,\mu)}.		
	\end{equation}

\noindent
{\rm (ii)}If $\mu$ is Lebesgue measure and $\gamma \in \R\setminus[0,n]$ 
then
\begin{equation}\label{thesecond}
		\|f\|_{\Opgamma(\rn)}:=  \|b(f)\|_{l_\gamma^{p,\infty}(\rn)}\lesssim \|f\|_{\JNp(\rn,\cD)}.		
	\end{equation}
	
\noindent
{\rm (iii)} Finally, if $f \in \dot{W}^{1,p}(\rn)$, $1 < p < \infty$, then
\begin{equation}
\label{thethird}
\|f\|_{\Op(\rn)} \lesssim \lpn{\nabla f}.
\end{equation}
\end{theorem}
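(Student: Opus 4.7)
The plan is to reduce the dyadic weak-type quasi-norm to an $L^p$-estimate for the dyadic Hardy--Littlewood maximal function of $|\nabla f|$, via the classical $(1,1)$-Poincar\'e inequality on cubes. (One could instead invoke \eqref{Opmf} together with Frank's continuous result \eqref{frankresult}, but a direct dyadic proof is preferable here.) For each dyadic cube $Q$, Poincar\'e gives
\[
O(f,Q) = \fint_Q |f-f_Q|\,dx \le C\,\ell(Q) \fint_Q |\nabla f|\,dx,
\]
and since $\fint_Q |\nabla f|\,dx \le M^d(|\nabla f|)(x)$ for every $x \in Q$, where $M^d$ denotes the dyadic Hardy--Littlewood maximal operator, one obtains the pointwise bound $O(f,Q) \le C\,\ell(Q)\, M^d(|\nabla f|)(x)$, valid for all $x \in Q$.

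Next I would swap the sum over cubes with a spatial integral. For $x \in \rn$ and $k \in \Z$, let $Q_k(x) \in \cD$ denote the unique dyadic cube of sidelength $2^k$ containing $x$. By Fubini,
\[
\sum_{\substack{Q \in \cD \\ O(f,Q) > \lambda}} \ell(Q)^{-p} |Q| = \int_{\rn} \sum_{\substack{k \in \Z \\ O(f, Q_k(x)) > \lambda}} 2^{-kp}\,dx.
\]
The previous estimate shows that $O(f, Q_k(x)) > \lambda$ can occur only when $2^k > c\lambda/M^d(|\nabla f|)(x)$ for an absolute constant $c > 0$, so for each fixed $x$ the inner sum is the tail of a geometric series in $k$ and is dominated by a constant multiple of $\bigl(M^d(|\nabla f|)(x)/\lambda\bigr)^p$.

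Integrating this pointwise bound and applying the $L^p$-boundedness of $M^d$, valid for $1 < p < \infty$, yields
\[
\lambda^p \sum_{\substack{Q \in \cD \\ O(f,Q) > \lambda}} \ell(Q)^{-p} |Q| \lesssim \|M^d(|\nabla f|)\|_{L^p(\rn)}^p \lesssim \|\nabla f\|_{L^p(\rn)}^p,
\]
uniformly in $\lambda > 0$; taking the supremum in $\lambda$ gives the claim. The only real point to monitor is the matching of scales in the inner sum: the weight $\ell(Q)^{-p}$ exactly cancels the $\ell(Q)^p$ produced by Poincar\'e, so the geometric series collapses cleanly to a power of the maximal function. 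The restriction $p>1$ enters only through the $L^p$-boundedness of $M^d$.
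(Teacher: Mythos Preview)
Your argument for part (iii) is correct, but it takes a different route from the paper's own proof. The paper proceeds by first treating the easier range $p>n$ via Lemma~\ref{Prop1} (reduction to disjoint collections, then Poincar\'e and Jensen), and for general $1<p<\infty$ it uses Poincar\'e to get $\ell(Q)^{-p}|Q|\lesssim \lambda^{-1}\int |\nabla f|\,\ell(Q)^{1-p}\chi_Q$, sums over a finite collection $\cC$, applies H\"older, and invokes Lemma~\ref{cubecollectionlemma} with $q=p'$ to close the estimate. Your approach instead swaps the sum for an integral via Fubini, uses the pointwise bound $O(f,Q)\le C\ell(Q)M^d(|\nabla f|)(x)$ to collapse the inner sum over scales to a geometric tail, and finishes with the $L^p$-boundedness of $M^d$. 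Both arguments ultimately exploit the same geometric decay of $\ell(Q)^{-p}$ along a chain of nested dyadic cubes, but the paper packages this into Lemma~\ref{cubecollectionlemma} (which it reuses for parts (i) and (ii) and in the bi-parameter setting), whereas your maximal-function route is self-contained for (iii) and arguably more transparent for this particular inequality. Note, however, that your proposal only addresses part (iii); parts (i) and (ii) require separate arguments.
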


\begin{proof}
We first consider the cases where we can apply Lemma~\ref{Prop1}, namely $\gamma<0$, or
 $\gamma>d>0$ with $\mu$ satisfying the additional assumption that $\mu(Q) \approx \ell(Q)^d$ for all $Q \in \cD$.

For a disjoint collection  $\cP$ of dyadic cubes $Q$ with $|a(f)_Q| > \lambda$, we have, by Jensen's inequality, 
	\begin{equation*}
\lambda^p	\sum_{Q\in \cP}\ell(Q)^{-\gamma}\mu(Q) \le \sum_{Q\in \cP}\ell(Q)^{-\gamma}\mu(Q)|a(f)_Q|^p \le  \sum_{Q\in \cP}\int_{Q} |f|^p d\mu \le \lpn{f}.
	\end{equation*}
Taking the supremum over all such collections $\cP$ and all $\lambda > 0$ and applying Lemma~\ref{Prop1} gives the result.

Similarly, if $\mu$ is Lebesgue measure, $\gamma \in \R\setminus[0,n]$, and $\cP$ is a disjoint collection of dyadic cubes $Q$ with $|b(f)_Q| > \lambda$, then
	\begin{equation*}
\lambda^p	\sum_{Q\in \cP}\ell(Q)^{-\gamma}|Q| \le \sum_{Q\in \cP}\ell(Q)^{-\gamma}|Q| |b(f)_Q|^p \le  \sum_{Q\in \cP}|Q|O(f,Q)^p \le \|f\|_{\JNp(\Rn, \cD)}.
	\end{equation*}
Again we take the supremum and apply Lemma~\ref{Prop1}.

We can only apply Lemma~\ref{Prop1} to prove \eqref{thethird} for $f \in \dot{W}^{1,p}(\rn)$ in the case $\gamma = p > n$.  We can then use the Poincar\'e inequality
\begin{equation}
\label{Poincare}
O(f,Q) \lesssim \ell(Q)\fint_Q |\nabla f|
\end{equation}
and Jensen's inequality to obtain, as in the proof of \eqref{thefirst} above, that 
$$\|f\|_{\Op(\rn)} \le \sup_\cP \sum_{Q\in \cP}\ell(Q)^{-p}|Q| |O(f,Q)|^p \le \sup_\cP \sum_{Q\in \cP}\int_{Q} |\nabla f|^p = \lpn{\nabla f}.$$

We now use Lemma~\ref{cubecollectionlemma} to prove \eqref{thefirst} in the case $1<p<\infty$ and $\gamma > 0$.

Fix $\lambda>0$ and take an arbitrary finite collection $\cC$ of dyadic cubes $Q$ such that $|a(f)_Q|>\lambda$. By the definition of $a(f)$  we have, for $p'$ the conjugate exponent of $p$,
	\[
	\ell(Q)^{-\gamma}\mu(Q)<\frac{1}{\lambda}\int |f| \ell(Q)^{-\frac{\gamma}{p'}}\chi_Qd\mu, \quad Q\in\cC.
	\]
	Summing over $\cC$ gives us
	\[
\sum_{Q\in\cC}	\ell(Q)^{-\gamma}\mu(Q)<\frac{1}{\lambda}\int|f|\sum_{Q\in \cC}\ell(Q)^{-\frac{\gamma}{p'}}\chi_Qd\mu, 
	\]
	which after applying H\"older's inequality yields
	\[
	\sum_{Q\in\cC}	\ell(Q)^{-\gamma}\mu(Q)<\frac{1}{\lambda}\|f\|_{L^p(\rn,\mu)}\Big\|\sum_{Q\in \cC}\ell(Q)^{-\frac{\gamma}{p'}}\chi_Q\Big\|_{L^{p'}(\rn,\mu)}.
	\]
	An application of Lemma~\ref{cubecollectionlemma}  with $q = p'$ gives
	\[
		\sum_{Q\in\cC}	\ell(Q)^{-\gamma}\mu(Q)\lesssim\frac{1}{\lambda}\|f\|_{L^p(\rn,\mu)}\left(\sum_{Q\in\cC} \ell(Q)^{-\gamma}\mu(Q)\right)^{\frac{1}{p'}},
	\]
	and noting that $\cC$ is finite shows that
	\[
	\left(\lambda^p	\sum_{Q\in\cC}	\ell(Q)^{-\gamma}\mu(Q)\right)^{\frac{1}{p}}\lesssim\|f\|_{L^p(\rn,\mu)}.
	\]
	To finish the proof in this case it is enough to take the supremum first over $\cC$ and then over $\lambda>0$.
	
The proof of \eqref{thethird} for the general the case $1<p<\infty$ is very similar.  Here we again use the Poincar\'e inequality \eqref{Poincare} to get that if $|O(f,Q)|>\lambda > 0$ then
$$\ell(Q)^{-p}|Q|\lesssim\frac{1}{\lambda}\int |\nabla f| \ell(Q)^{1-p}\chi_Q .$$
Noting that in this case $\gamma = p$ means $1 - p =  -\frac{\gamma}{p'}$, the same steps as above and Lemma~\ref{cubecollectionlemma} with $q = p'$ yield 
$$\left(\lambda^p	\sum_{Q\in\cC}	\ell(Q)^{-p}|Q|\right)^{\frac{1}{p}}\lesssim\|\nabla f\|_{L^p(\rn)},$$
thus 	giving \eqref{thethird} .
\end{proof}

The theorem raises several questions.  First, what happens in the cases that are not covered?  As we will see in Section~\ref{Examples}, when $\mu$ is Lebesgue measure and $\gamma \in [0,n]$, the embedding given by \eqref{thefirst} is false for $p = 1$.  As for  the embedding given by \eqref{thesecond}, it fails when $1 \le p < \infty$ and $\gamma \in [0,n]$, and is proper for the other values of $\gamma$.

Second, do the reverse inclusions hold?  We will answer this in the positive for \eqref{thefirst} in Section~\ref{Reverse} below, and in the negative for \eqref{thesecond} via examples in  Section~\ref{Examples}.
We already pointed out in Section~\ref{Section1} that the reverse inclusion in  \eqref{thethird} is false since, by the result of \cite{Frank}, \eqref{frankresult}, this would mean
$\|m(f)\|_{L^{p,\infty}(\Rnp, \nu_p)} \lesssim \|f\|_{\Op(\rn)}$, which fails for a simple example.  In Section~\ref{Continuous} we will discuss the relation between the other dyadic weak-type quasi-norms and their continuous counterparts in the upper half-space.

Finally, are there similar results in the multi-parameter setting?  This will be discussed in the next section.

\section{Bi-parameter embeddings}
\label{Biparameter}

In this section we discuss some counterparts of Theorem~\ref{embeddings} part (i) 
in the two parameter setting. By a dyadic rectangle in $\rnm$ we mean the product of two dyadic cubes in $\rn$ and $\Rm$. We write $\cD_n\times \cD_m$ for the collection of all such dyadic rectangles $R=Q_1\times Q_2$, and denote the sidelengths by $\ell_1(R)=\ell(Q_1)$ and $\ell_2(R)=\ell(Q_2)$.  We set
$$\lmax(R) := \max(\ell_1(R), \ell_2(R)), \quad \lmin(R) := \min(\ell_1(R), \ell_2(R)),$$
and define the mean sidelength of $R$ by
\begin{equation}
\label{lab}
\lab(R) := \lmin(R)^\alpha \lmax(R)^\beta, \quad  \alpha + \beta = 1.
\end{equation}
Note that this gives the geometric mean of the sidelengths when $\alpha = \beta = \frac{1}{2}$, and reduces to $\ell(Q)$ when $Q$ is a cube.

We continue with our standing assumptions that the dyadic cubes (and hence rectangles) are half-open and thus pairwise disjoint, and that $\mu$ is a Radon measure, this time on $\rnm$.

\begin{theorem}
\label{thm-biparameter}
	Let $1<p<\infty$, $\gamma>0$, and $f\in L_{loc}^1(\rnm, \mu)$.  For a dyadic rectangle $R$, using the notation in \eqref{lab}, assume  $\beta \le 0$ and set
	\[
	a(f)_R:= \lab(R)^{\frac{\gamma}{p}}\fint_{R}f d\mu. 
	\]
	 Then
	 \[
	\left(\sup_{\lambda>0}\lambda^p\sum_{|a(f)_R|>\lambda}\labp(R)^{-\gamma}\mu(R)\right)^{\frac{1}{p}}\lesssim\lprnmmun{f}
	\]
where $\alpha' + \beta' = 1$, $0 \le \alpha' < 1$.
	\end{theorem}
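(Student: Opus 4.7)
The plan is to follow the template of the proof of Theorem~\ref{embeddings}(i) in the case $\gamma>0$, $1<p<\infty$, adapting the argument from dyadic cubes to dyadic rectangles in three steps.

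Step 1. I fix $\lambda>0$ and take an arbitrary finite collection $\cC\subset\cD_n\times\cD_m$ of rectangles $R$ with $|a(f)_R|>\lambda$. The key algebraic identity is that, since $\alpha+\beta=\alpha'+\beta'=1$ and $\alpha\ge 1>\alpha'$ (so $\alpha-\alpha'>0$ and $\beta-\beta'=-(\alpha-\alpha')\le 0$), direct computation yields
\[
\lab(R)^{\gamma/p}\,\labp(R)^{-\gamma}
=\bigg(\frac{\lmin(R)}{\lmax(R)}\bigg)^{(\alpha-\alpha')\gamma/p}\labp(R)^{-\gamma/p'}
\le\labp(R)^{-\gamma/p'},
\]
the last inequality following from $\lmin\le\lmax$. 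Starting from $\lab(R)^{\gamma/p}|\fint_R f\,d\mu|>\lambda$ and multiplying through by $\labp(R)^{-\gamma}/\lab(R)^{\gamma/p}$ gives
\[
\labp(R)^{-\gamma}\mu(R)<\frac{1}{\lambda}\int|f|\,\lab(R)^{\gamma/p}\labp(R)^{-\gamma}\chi_R\,d\mu\le\frac{1}{\lambda}\int|f|\,\labp(R)^{-\gamma/p'}\chi_R\,d\mu.
\]

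Step 2. Summing over $R\in\cC$ and applying H\"older's inequality with exponents $p,p'$ gives
\[
\sum_{R\in\cC}\labp(R)^{-\gamma}\mu(R)<\frac{1}{\lambda}\lprnmmun{f}\,\bigg\|\sum_{R\in\cC}\labp(R)^{-\gamma/p'}\chi_R\bigg\|_{L^{p'}(\rnm,\mu)}.
\]

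Step 3. It remains to establish a bi-parameter analogue of Lemma~\ref{cubecollectionlemma}, namely
\[
\bigg\|\sum_{R\in\cC}\labp(R)^{-\gamma/p'}\chi_R\bigg\|_{L^{p'}(\rnm,\mu)}^{p'}\lesssim\sum_{R\in\cC}\labp(R)^{-\gamma}\mu(R).
\]
For this I would split $\cC=\cC^{(1)}\sqcup\cC^{(2)}$ according to whether $\ell_1(R)\le\ell_2(R)$ or not, so that the identity of $\lmax$ is known on each part. On $\cC^{(1)}$, where $\lmax=\ell_2$, the weight factors as $\labp(R)^{-\gamma/p'}=\ell_1(R)^{-\alpha'\gamma/p'}\,\ell_2(R)^{-\beta'\gamma/p'}$, so grouping rectangles by their $Q_2$ reduces the inner sum to a one-parameter problem in the $x$-variable with dyadic cubes $Q_1$ and weight $\ell_1^{-\alpha'\gamma/p'}$, to which Lemma~\ref{cubecollectionlemma} applies in the form needed. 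The outer summation over the $\lmax$-scales $\ell_2=2^k$ is controlled by the factor $\ell_2^{-\beta'\gamma/p'}$, which is summable precisely because $\beta'>0$ (equivalently $\alpha'<1$); the part $\cC^{(2)}$ is treated symmetrically.

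The main obstacle will be Step 3, since dyadic rectangles are only partially ordered under inclusion and the clean nested-chain argument behind Lemma~\ref{cubecollectionlemma} does not transfer directly; the interaction between scales in the two coordinate directions must be handled carefully, and the hypothesis $\beta'>0$ is essential for the required summability across scales. Once this bi-parameter estimate is in hand, dividing the display of Step 2 through by $(\sum_{R\in\cC}\labp(R)^{-\gamma}\mu(R))^{1/p'}$, raising to the $p$-th power, and taking the supremum over $\lambda>0$ and finite $\cC\subset\cD_n\times\cD_m$ yields the statement of the theorem.
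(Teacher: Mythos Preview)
Your three-step plan---H\"older's inequality followed by a bi-parameter analogue of Lemma~\ref{cubecollectionlemma}---is exactly the paper's route. The problem is the simplification in Step~1: by bounding
\[
\lab(R)^{\gamma/p}\,\labp(R)^{-\gamma}
=\Big(\tfrac{\lmin(R)}{\lmax(R)}\Big)^{(\alpha-\alpha')\gamma/p}\labp(R)^{-\gamma/p'}
\le \labp(R)^{-\gamma/p'},
\]
you discard the factor $(\lmin/\lmax)^{(\alpha-\alpha')\gamma/p}$ and commit Step~3 to the \emph{same-weight} estimate
\[
\Big\|\sum_{R\in\cC}\labp(R)^{-\gamma/p'}\chi_R\Big\|_{L^{p'}(\mu)}^{p'}\lesssim\sum_{R\in\cC}\labp(R)^{-\gamma}\mu(R).
\]
This estimate is \emph{false} at the endpoint $\alpha'=0$ (which the theorem allows). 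Take $n=m=1$, Lebesgue measure, and $\cC=\{I\times[0,1):I\in\cD([0,1)),\ 2^{-N}\le\ell(I)\le 1\}$. Then $\labp(R)=\lmax(R)=1$ for every $R\in\cC$, so the left side is $\|\,(N+1)\chi_{[0,1)^2}\|_{L^{p'}}^{p'}=(N+1)^{p'}$ while the right side is $\sum_R|R|=N+1$. Your own sketch flags the failure: the inner one-parameter problem carries weight $\ell_1^{-\alpha'\gamma/p'}$, and when $\alpha'=0$ this is identically~$1$, so Lemma~\ref{cubecollectionlemma} (whose hypothesis is $\gamma\ne 0$) does not apply and the count of $\ell_1$-scales is never absorbed.

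The paper's fix is precisely not to throw that factor away. It writes $\lab(R)^{\gamma/p}\labp(R)^{-\gamma}=\lde(R)^{-\gamma/p'}$ with $\delta=(\alpha'-\alpha)p'+\alpha$, $\varepsilon=(\beta'-\beta)p'+\beta$, and proves a bi-parameter lemma (Lemma~\ref{minmaxlemmarec}) in which the weight $\lde$ on the left is allowed to differ from $\ldep=\labp$ on the right, under the \emph{strict} condition $0<\varepsilon'<\varepsilon$. The gap $\varepsilon-\varepsilon'=\tfrac{p'}{p}(\alpha-\alpha')>0$ is exactly the exponent you discarded; in the pointwise estimate it furnishes geometric decay in the $\lmin$-direction, so the double sum converges even when $\delta'=\alpha'=0$. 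For $\alpha'>0$ your simplified Step~3 does go through (the same pointwise argument works with $\varepsilon'=\varepsilon$ since then $b=\alpha'(-\gamma/p')<0$), so your proof is correct on the open range $0<\alpha'<1$ but has a genuine gap at $\alpha'=0$.
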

Inequality \eqref{thefirst} in part (i) of Theorem~\ref{embeddings} can be obtained from this theorem by restricting to cubes. As in the proof of the case $1 < p < \infty$, $\gamma > 0$ of that inequality, we will show the theorem as a consequence of the following lemma, which is a bi-parameter analogue of Lemma~\ref{cubecollectionlemma}.

\begin{lemma}\label{minmaxlemmarec}
	Let $\cC$ be a finite collection of dyadic rectangles. Using the notation in \eqref{lab}, if  $\gamma > 0$, $1 < q < \infty$, $\delta + \varepsilon = 1$, $\varepsilon > 0$ and  
	\[
	g(x,y)=\sum_{R\in\cC}\lde(R)^{-\frac{\gamma}{q}}\chi_R(x,y),
	\]
	then
	\begin{equation*}\label{recangleestimate}
		\|g\|_{L^{q}(\rnm, \mu)}\lesssim\left(\sum_{R\in\cC}\ldep(R)^{-\gamma}\mu(R)\right)^{\frac{1}{q}}
	\end{equation*}
	for $\delta' + \varepsilon' = 1$, $\delta' \ge 0$, $0 <\varepsilon' < \varepsilon$.	
\end{lemma}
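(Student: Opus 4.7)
The plan is to mirror the one-parameter argument of Lemma~\ref{cubecollectionlemma}, adapted to the bi-parameter setting. The natural first move is to split $\cC$ according to which side of a rectangle is longer, writing $\cC = \cC^h \sqcup \cC^v$ with $\cC^h = \{R = Q_1 \times Q_2 : \ell(Q_1) \ge \ell(Q_2)\}$. Since the two factors play symmetric roles, it suffices to handle $\cC^h$, on which $\lmax(R) = \ell(Q_1)$ and $\lmin(R) = \ell(Q_2)$, so that the weight factors cleanly as
\[
\lde(R)^{-\gamma/q} \;=\; \ell(Q_2)^{-\delta\gamma/q}\,\ell(Q_1)^{-\varepsilon\gamma/q},
\]
with $\varepsilon > 0$ supplying geometric decay in the long direction.

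Next I would group $\cC^h$ by its first factor, $\cC^h = \bigsqcup_{Q_1 \in \cD_n} \cC^h_{Q_1}$ with $\cC^h_{Q_1} = \{R = Q_1 \times Q_2 \in \cC^h\}$. For fixed $Q_1$, the associated inner family $\{Q_2 \in \cD_m : Q_1 \times Q_2 \in \cC^h_{Q_1}\}$ consists of dyadic cubes in $\Rm$ with $\ell(Q_2) \le \ell(Q_1)$, and the inner sum
\[
h_{Q_1}(y) \;=\; \sum_{Q_2 : Q_1 \times Q_2 \in \cC^h_{Q_1}} \ell(Q_2)^{-\delta\gamma/q}\chi_{Q_2}(y)
\]
is exactly of the type handled by Lemma~\ref{cubecollectionlemma} in the $y$-variable (with exponent $\delta\gamma$, against an appropriate fiber of $\mu$; the degenerate case $\delta=0$, i.e.\ $\varepsilon=1$, is slightly different and would be handled separately). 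Combined with the prefactor $\ell(Q_1)^{-\varepsilon\gamma/q}$, this should produce the clean contribution $\sum_{R \in \cC^h_{Q_1}} \lde(R)^{-\gamma}\mu(R)$ for each $Q_1$ after raising to the $q$-th power and integrating.

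The hard part will be the combining step across different sidelengths of $Q_1$: at a fixed scale the cubes $Q_1$ are disjoint and the fiberwise estimates add without interference, but across scales one needs a second application of a cube-type argument in the $x$-variable, and this is precisely where the gap $\varepsilon' < \varepsilon$ enters. I would aim to package the whole argument as a pointwise bound
\[
g(x,y)^q \;\lesssim\; \sum_{R \ni (x,y)} \ldep(R)^{-\gamma},
\]
which integrates immediately against $\mu$ to give the lemma. At each $(x,y)$ the rectangles in $\cC$ containing $(x,y)$ are indexed by a finite subset of pairs of dyadic scales $(\log_2\ell(Q_1),\log_2\ell(Q_2)) \in \Z^2$, and the weight $\lmin^{-\delta\gamma/q}\lmax^{-\varepsilon\gamma/q}$ decays geometrically in the $\lmax$-direction because $\varepsilon > 0$; the strict slack $\varepsilon - \varepsilon' > 0$ is exactly what is needed to absorb the loss coming from summing over this 2D lattice of scales into the $\ldep$-weight on the right. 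Because $\mu$ is only assumed to be a Radon measure on $\rnm$ and need not split as a product on $\rn \times \Rm$, iterated Fubini is unavailable, so routing the proof through such a pointwise inequality and a single integration against $\mu$ looks like the cleanest way to avoid that obstruction.
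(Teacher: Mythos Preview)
Your final plan—split by which side is longer, prove a pointwise inequality at each $(x,y)$, and integrate once against $\mu$—is exactly the paper's route, and you correctly diagnose why the iterated fiberwise idea cannot work for a non-product $\mu$. The difference lies only in the pointwise step. The paper runs the generation construction of Lemma~\ref{cubecollectionlemma} separately on the two projections $\pi_1(\cC)$ and $\pi_2(\cC)$; this partitions $\cup\cC$ into pieces $E_{j,k}$, on each of which there is a distinguished ``minimal'' rectangle $R$ (the one whose first factor lies in generation $j$ and second factor in generation $k$), and a direct double geometric sum over all larger scales yields the single-term bound $g(x,y)\lesssim \ldep(R)^{-\gamma/q}$, with the conditions $\delta'\ge 0$, $\varepsilon>0$ and the slack $\varepsilon-\varepsilon'>0$ entering exactly where you predict. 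Your target inequality $g(x,y)^q\lesssim \sum_{R\ni(x,y)}\ldep(R)^{-\gamma}$ is also true and integrates with no generation bookkeeping at all, but it is less automatic than your sketch suggests: one clean proof reparametrizes by $v=\log_2\lmin$ and eccentricity $u=\log_2(\lmax/\lmin)\ge 0$, collapses the geometric $v$-sum at each fixed $u$, and then applies H\"older in $u$, the weight $\sum_{u\ge 0}2^{-u(\varepsilon-\varepsilon')\gamma q'/q}$ being finite precisely because $\varepsilon'<\varepsilon$. So the two arguments use identical ingredients in slightly different packaging; the paper's generation structure makes the pointwise bound a transparent geometric series, while your version trades that bookkeeping for a less obvious inequality.
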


\begin{proof}
	We begin by partitioning $\cC$ into two sub-collections $\cC_1$ and $\cC_2$ as
	\[
	\cC_1=\left\{R\in\cC : \ell_1(R)\le \ell_2(R)\right\}, \quad
	\cC_2=\left\{R\in\cC : \ell_1(R)> \ell_2(R)\right\},
	\]
	and then we prove inequality \eqref{recangleestimate} for the functions $g$ corresponding to $\cC_1$ and $\cC_2$. Since the proofs of these two inequalities are exactly the same, without loss of generality we may assume that for $R\in \cC$ we have $\ell_1(R)\le \ell_2(R)$.\\
	
	Let $\pi_1(\cC)$ and $\pi_2(\cC)$ be the collection of cubes obtained by projecting rectangles in $\cC$ onto $\rn$ and $\mathbb{R}^m$ respectively. Then for $l=1,2$ and $i\in \N$ let $\cC^l_i$ be the $i$-th generation of $\pi_l(\cC)$ and $E^l_i$ be the associated partitioning of $\cup\pi_l(\cC)$, as constructed in the proof of Lemma \ref{cubecollectionlemma}. Next, we partition $E=\cup\cC$ as
	\[
	E=\cup_{j, k}E_{j, k}, \quad E_{j, k}=\left\{(x,y)\in E : x\in E^1_j, \quad y\in E^2_k\right\},
	\]
	and note that for each $(x,y)\in E_{j,k}$ there exists a unique dyadic rectangle $R\in\cC$ such that 
	\[
	R=Q_1\times Q_2, \quad Q_1\in\cC^1_j, Q_2\in\cC^2_k.
	\]
	If $(x,y)$ belongs to a rectangle $R'=Q'_1\times Q'_2 \in \cC$ then we must have
	\[
	Q_1\subset Q'_1, \quad Q_2\subset Q'_2,
	\]
	as well as
	\[
	\ell(Q_1)\le \ell(Q_2), \quad \ell(Q'_1)\le \ell(Q'_2).
	\]
	Write
	$$\lde(R') = \ell_1(R')^{\delta}\ell_2(R')^{\varepsilon} = \ell_1(R')^{\delta-\delta' + \delta'}\ell_2(R')^{\varepsilon}.$$
	Since $\delta-\delta' = \varepsilon' - \varepsilon < 0$, we have
	$$a := (\delta-\delta')\Big(\frac{-\gamma}{q}\Big) > 0,\quad b := \delta' \Big(\frac{-\gamma}{q}\Big) \le 0, \quad c := \varepsilon \Big(\frac{-\gamma}{q}\Big), \quad \mbox{and }
	a + c = \varepsilon' \Big(\frac{-\gamma}{q}\Big) < 0.$$
	
	The above observations give the following estimate for $g(x,y)$:
	\[
	g(x,y)=\sum\limits_{\substack{R'\in \cC\\ (x,y)\in R}}\lde(R')^{-\frac{\gamma}{q}}
	\le \sum\limits_{2^t \ge \ell_2(R)}\sum_{\ell_1(R)\le 2^s \le \ell_2(R)} 2^{s(a+b)}2^{tc} \lesssim \ell_1(R)^b \sum\limits_{2^t \ge \ell_2(R)}2^{t(a+c)} \lesssim \ldep(R)^{-\frac{\gamma}{q}}.
	\]
	Noting that for each $(x,y)$ the rectangle $R$ is unique and integrating $g$ over $E_{j,k}$ by using the above bound, we obtain
	\[
	\int_{E_{j,k}}g^{q}d\mu\lesssim\sum_{\substack{R\in\cC, R=Q_1\times Q_2\\ Q_1\in C^1_j, Q_2\in C^2_k}}\ldep(R)^{-\gamma}\mu(R).
	\]
	Now it is enough to sum over $(j,k)$ and by doing so we get
	\[
	\int_{\rnm}g^{q}d\mu=\sum_{j,k}\int_{E_{j,k}}g^{q}d\mu\lesssim\sum_{R\in\cC}\ldep(R)^{-\gamma}\mu(R),
	\]
	which completes the proof.
	\end{proof}

\begin{proof}[Proof of Theorem~\ref{thm-biparameter}]
Fix $\lambda>0$ and take a dyadic rectangles $R$ such that $|a(f)_R|>\lambda$. By the definition of $a(f)_R$  we have
	\[
	\labp(R)^{-\gamma}\mu(R) = \labp(R)^{-\gamma}\lab(R)^{\frac{\gamma}{p}}\lab(R)^{-\frac{\gamma}{p}}\mu(R)<\frac{1}{\lambda}\int |f| \labp(R)^{-\gamma}\lab(R)^{\frac{\gamma}{p}}\chi_R d\mu.
	\]
	Write $$\labp(R)^{-\gamma}\lab(R)^{\frac{\gamma}{p}} = \left(\lmin(R)^{\alpha'p' - \alpha\frac{p'}{p}}\lmax(R)^{\beta' p' - \beta \frac{p'}{p}}\right)^{-\frac{\gamma}{p'}} = \lde(R)^{-\frac{\gamma}{p'}},$$
	where $\frac 1 p + \frac1 {p'} = 1$, $\delta = \alpha'p' - \alpha\frac{p'}{p} = (\alpha' - \alpha)p' + \alpha$, $\varepsilon = \beta' p' - \beta \frac{p'}{p} = p'(\beta' - \frac{\beta}{p}) > 0$.
Continuing as in the proof of \eqref{thefirst}, we replace the use of Lemma~\ref{cubecollectionlemma} by an application of Lemma~\ref{minmaxlemmarec} with 
$$q = p',\quad \delta' = \alpha' \ge 0,\quad \mbox{and } 0 < \varepsilon' = \beta' < \beta' p' - \beta \frac{p'}{p}  = \varepsilon.$$
	\end{proof}
\section{Reverse inclusions}
\label{Reverse}
We now turn to finding a converse to \eqref{thefirst}.   To do so, for $a$ defined on the dyadic cubes and $\mu$ as specified at the beginning of Section~\ref{Section2}, we consider quantities which are smaller than
$\|a\|_{l_\gamma^{p,\infty}(\rn, \mu)}$, defined by
\begin{equation}
\label{gampos}
	[a]_{l_\gamma^{p,\infty}(\rn, \mu)}:=
		\left(\liminf\limits_{\lambda\to0^+}\lambda^{p}\sum_{|a_Q|>\lambda}\ell(Q)^{-\gamma}\mu(Q)\right)^{\frac{1}{p}},\quad \gamma>0,
\end{equation}
and
\begin{equation}
\label{gamneg}
	[a]_{l_\gamma^{p,\infty}(\rn, \mu)}:=
	\left(\limsup\limits_{\lambda\to\infty}\lambda^{p}\sum_{|a_Q|>\lambda}\ell(Q)^{-\gamma}\mu(Q)\right)^{\frac{1}{p}},\quad \gamma<0.
\end{equation}
In this section we will only consider the case of the means of a function $f\in L_{loc}^1(\rn, \mu)$:
$$a(f)_Q:=\ell(Q)^{\frac{\gamma}{p}}\fint_{Q} fd\mu.$$
Then we have the following:
\begin{theorem}
\label{cubelptheorem}
Let $1\le p<\infty$.  If either
\begin{itemize}
	\item $\gamma>0$, or
	\item $\gamma<0$ and in addition the measure $\mu$ is doubling,
\end{itemize}
then
\begin{equation}
	\|f\|_{L^p(\rn,\mu)}\lesssim [a(f)]_{l_\gamma^{p,\infty}(\rn, \mu)}.
\end{equation}
\end{theorem}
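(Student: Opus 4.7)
My plan is to decompose the weak-type sum generation-by-generation, exploit the geometric structure of the thresholds, and then apply Fatou's lemma twice to arrive at a geometric Riemann sum comparable to $\|f\|_{L^p(\mu)}^p$. For $k\in\Z$, denote by $\cD_k\subset\cD$ the dyadic cubes of sidelength $2^k$, let $Q_k(x)\in\cD_k$ be the unique cube of that generation containing $x$, and set $M_kf(x):=\dashint_{Q_k(x)}f\,d\mu$ (defined as zero on cubes of zero $\mu$-measure). Since on each $Q\in\cD_k$ the condition $|a(f)_Q|>\lambda$ is equivalent to $|M_kf|>\sigma_k$ on $Q$, where $\sigma_k:=\lambda\,2^{-k\gamma/p}$, collecting by generation will yield the key identity
\[
\lambda^p\sum_{Q\,:\,|a(f)_Q|>\lambda}\ell(Q)^{-\gamma}\mu(Q)\;=\;\sum_{k\in\Z}\sigma_k^p\,\mu\bigl(\{x\in\rn:|M_kf(x)|>\sigma_k\}\bigr).
\]
As $\lambda$ varies, the sequence $\{\sigma_k\}_k$ merely slides along a fixed geometric progression of ratio $2^{-\gamma/p}$.

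For the $\gamma>0$ case, I would write $\lambda=\rho\,2^{-m\gamma/p}$ with $m\in\Z$ and $\rho\in[1,2^{\gamma/p})$, and reindex $j:=k+m$, to recast the right-hand side as
\[
\lambda^p S(\lambda)\;=\;\rho^p\sum_{j\in\Z}2^{-j\gamma}\,\mu\bigl(\{|M_{j-m}f|>\rho\,2^{-j\gamma/p}\}\bigr).
\]
Now $\lambda\to 0^+$ forces $m\to\infty$, so $M_{j-m}f\to f$ pointwise $\mu$-a.e.\ for each fixed $j$ by dyadic martingale convergence (valid for any Radon measure). Extracting a subsequence with $\rho\to\rho_*\in[1,2^{\gamma/p}]$, a first Fatou step at the level of the sets $\{|M_{j-m}f|>\rho\,2^{-j\gamma/p}\}$ (which eventually contain $\{|f|>\rho_*\,2^{-j\gamma/p}+\varepsilon\}$) and a second Fatou on the outer series over $j$ (viewed as an integral against counting measure) will give
\[
\liminf_{\lambda\to 0^+}\lambda^p S(\lambda)\;\ge\;\sum_{j\in\Z}\widetilde\sigma_j^p\,\mu\bigl(\{|f|>\widetilde\sigma_j\}\bigr),\qquad\widetilde\sigma_j:=\rho_*\,2^{-j\gamma/p}.
\]
The right-hand side is a geometric Riemann sum of ratio $r=2^{-\gamma/p}<1$, and a direct log-interval comparison using monotonicity of $\sigma\mapsto\mu(\{|f|>\sigma\})$ will bound $\int_{\widetilde\sigma_{j+1}}^{\widetilde\sigma_j}\sigma^{p-1}\mu(\{|f|>\sigma\})\,d\sigma\le\frac{1-r}{r^p}\widetilde\sigma_{j+1}^p\mu(\{|f|>\widetilde\sigma_{j+1}\})$; summing then yields $\sum_j\widetilde\sigma_j^p\mu(\{|f|>\widetilde\sigma_j\})\gtrsim_{\gamma,p}\int_0^\infty p\sigma^{p-1}\mu(\{|f|>\sigma\})\,d\sigma=\|f\|_{L^p(\mu)}^p$, uniformly in the shift $\rho_*$.

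The $\gamma<0$ case should proceed identically after flipping orientations: $\lambda\to\infty$ again drives the relevant generations to $k\to-\infty$, dyadic martingale convergence applies, and the geometric sequence $\{\sigma_k\}$ now has ratio $>1$, so that the analogous log-interval comparison provides the matching lower bound on the resulting Riemann sum; since $\limsup\ge\liminf$, the same bound transfers to the $\limsup$-quantity. The doubling assumption is invoked at this step to secure the $L^p$-boundedness of the dyadic maximal function needed to control the distribution-function estimates in this flipped orientation. The main obstacle throughout is the two-step Fatou argument: dominated convergence on the outer series fails for want of a summable majorant, so Fatou must be used in its stead, and the shift parameter $\rho$ has to be tracked along arbitrary sequences $\lambda_n\to 0^+$ (or $\lambda_n\to\infty$), with subsequential limits $\rho_*$ accounted for uniformly.
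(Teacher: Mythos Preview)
Your approach is correct and genuinely different from the paper's. The paper proceeds by a Calder\'on--Zygmund stopping-time construction: fixing $Q_0$ and a carefully chosen $\lambda_m$, it selects the maximal dyadic subcubes $Q$ where $|a(f)_Q|\le\lambda_m$ first holds (for $\gamma>0$; for $\gamma<0$ it takes the maximal cubes where $|a(f)_Q|>\lambda_m$ and passes to their parents), defines an approximation $f_m$ from the averages over these cubes, bounds $\int_{Q_0}|f_m|^p$ by the weak-type sum through the parent--child relation, and lets $m\to\infty$ via Lebesgue differentiation and Fatou. Your route---the generation-by-generation identity $\lambda^pS(\lambda)=\sum_k\sigma_k^p\,\mu(\{|M_kf|>\sigma_k\})$, reindexing, dyadic martingale convergence in each layer, two applications of Fatou, and a geometric Riemann-sum comparison with the layer-cake formula for $\|f\|_{L^p}^p$---is more global and arguably more transparent; it is closer in spirit to the continuous-variable arguments in \cite{Oscar}.

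One correction concerning the $\gamma<0$ case: your stated reason for invoking doubling is misguided. The dyadic maximal operator is $L^p$-bounded for $p>1$ with respect to \emph{any} $\sigma$-finite measure---this is Doob's martingale maximal inequality and requires no geometric hypothesis---and in any case your argument never uses the maximal function. In fact, your Fatou/Riemann-sum scheme gives a lower bound on $\liminf_{\lambda\to\infty}\lambda^pS(\lambda)$ using only the pointwise a.e.\ convergence of dyadic averages, which holds for every Radon $\mu$; since $\limsup\ge\liminf$, the claimed inequality follows without doubling. The paper's proof, by contrast, genuinely uses doubling in property~$2'$, where one must compare $\mu(\bar Q)$ to $\mu(Q')$ for a child $Q'\subset\bar Q$. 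So your method actually yields a slightly stronger conclusion than the theorem as stated; you should drop the spurious appeal to doubling and note this improvement instead.
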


\begin{proof}
Case 1:  $\gamma>0.$\\
 Fix an arbitrary dyadic cube $Q_0$ and $m\in \N$, and recall that $\cD_m(Q_0)$ denotes the collection of dyadic sub-cubes of $Q_0$ with length $2^{-m}\ell(Q_0)$. Set
\[
 \cD'_m=\left\{Q\in \cD_m(Q_0) : a(f)_Q\ne0 \right\}, \quad \lambda_m'=\min\left\{|a(f)_Q|:Q\in \cD'_m\right\}, \quad \lambda_m=\frac{1}{2}\min\left\{\frac{1}{m},\lambda_m'\right\}.
\]
Taking $Q'\in\cD'_m$, note that $|a(f)_{Q'}|> \lambda_m$ and consider the collection
\begin{equation}
\label{small}
\cC(Q') = \{Q \in \cD(Q'): |a(f)_Q|\le \lambda_m\}.
\end{equation}
Denote by $\cC_m(Q')$ the maximal dyadic cubes in $\cC(Q')$ and let $\cC_m=\cup_{Q'\in\cD'_m}\cC_m(Q')$.  Note that this collection has the following properties:\\

1. For $Q\in \cC_m$ we have $\ell(Q)<2^{-m}\ell(Q_0)$ and $|a(f)_Q|\le \lambda_m$. In addition, denoting by $\Qbar$ the parent of $Q$, we have, by the maximality of $Q$, that  $|a(f)_{\Qbar}|>\lambda_m$.\\

2. We have 
\[
\sum_{Q\in\cC_m}\ell(Q)^{-\gamma}\mu(Q)\le 2^{\gamma n}\sum_{Q\in\cC_m}l(\Qbar)^{-\gamma}\mu(\Qbar)\lesssim \sum_{|a_{Q'}(f)|>\lambda_m}l(Q')^{-\gamma}\mu(Q').
\]

3. The measure $\mu(\cup\cD'_m\setminus\cup\cC_m) = 0$.  To see this, consider $x\in \cup\cD'_m\setminus\cup\cC_m$.  Then for some $Q'\in\cD'_m$,  $x \in Q'\setminus \cup \cC(Q')$, which means, by \eqref{small}, that
there is a sequence of dyadic cubes $\left\{Q_i\right\}$ containing $x$, with $\ell(Q_i)\le 2^{-i}\ell(Q')$, such that 
\[
\ell(Q_i)^{\frac{\gamma}{p}}\Big|\fint_{Q_i}fd\mu\Big|>\lambda_m.
\]
Letting $i \ra \infty$ (recall that $m$ is fixed) we note that $\ell(Q_i)^{\frac{\gamma}{p}}\to 0$ since $\gamma>0$, and by the Lebesgue differentiation theorem $|\fint_{Q_i}fd\mu|\to |f(x)|$ for $\mu$-almost every $x$, so we must have $|f(x)|=\infty$, but since $f$ is locally integrable the measure of the set of such points is zero.\\

Consider the function
\[
f_m:=\sum_{Q\in\cC_m}\chi_Q\Big|\fint_{Q}fd\mu\Big|.
\]
By property $2$ above, 
\[
\int_{Q_0}|f_m|^pd\mu=\sum_{Q\in\cC_m}\Big|\fint_{Q}fd\mu\Big|^p\mu(Q)=\sum_{Q\in\cC_m}|a(f)_Q|^p\ell(Q)^{-\gamma}\mu(Q)\lesssim \lambda_m^{p}\sum_{|a_{Q'}(f)|>\lambda_m}l(Q')^{-\gamma}\mu(Q').
\]

Letting $m \ra \infty$ and using the Lebesgue differentiation theorem, recalling that $\fint_{Q} f= 0$ for $Q \in  \cD_m(Q_0)\setminus\cD'_m$, we get the convergence of
 $f_m$ to $|f|$ on $Q_0\setminus \bigcup_m (\cup\cD'_m\setminus\cup\cC_m)$, hence by property $3$ above for $\mu$-almost every point in $Q_0$. Applying Fatou's lemma, and recalling that by definition $\lambda_m \ra 0$, we obtain 
\[
\int_{Q_0}|f|^pd\mu\le\liminf\limits_{m\to\infty}\int_{Q_0}|f_m|^pd\mu\lesssim\liminf\limits_{\lambda\to0^+} \lambda^{p}\sum_{|a_{Q'}(f)|>\lambda}l(Q')^{-\gamma}\mu(Q').
\]
This completes the proof since $Q_0$ was arbitrary.\\

\noindent
Case 2:  $\gamma<0$ and $\mu$ is doubling.\\ 
The argument is analogous to that in Case 1, but taking increasing instead of decreasing $\lambda$.  Again we fix $Q_0 \in \cD$, $m \in \N$, and set
\[
\lambda_m'=\max\left\{|a(f)_Q|:Q\in \cD_i(Q_0), 0\le i\le m \right\}, \quad \lambda_m=2\max\left\{m,\lambda_m'\right\}.
\]
Let $\cC'_m = \Cmax$  where $\cC = \{Q \in \cD(Q_0): |a(f)_Q|\ > \lambda_m\}.$
By the definition of $\lambda_m$, each of the cubes in $\cC'_m$ belongs to $\cD_k(Q_0)$ for  some $k > m$. 

Letting $\cC''_m$ be maximal cubes of the collection of parents $\{\Qbar : Q \in\cC'_m\}$, we observe that this collection enjoys the following properties:\\

$1'$. For each $Q\in \cC''_m$, we have $\ell(Q)\le2^{-m}\ell(Q_0)$ and $|a(f)_Q|\le\lambda_m$, which follows from the maximality of cubes in $\cC'_m$.\\

$2'$. 
Note that each cube $Q\in\cC''_m$ has a child $Q'\in\cC'_m$, and therefore from this and the doubling property of the measure $\mu$ we have
\[
\sum_{Q\in\cC''_m}\ell(Q)^{-\gamma}\mu(Q)\lesssim \sum_{Q'\in\cC'_m}\ell(Q')^{-\gamma}\mu(Q')\lesssim \sum_{|a_{Q''}(f)|>\lambda_m}\ell(Q'')^{-\gamma}\mu(Q'').
\]

$3'$. On the set $Q_0\setminus\cup\cC''_m$, $f$ is zero for $\mu$-almost every point. Similarly to the previous case, this can be seen by noting that if
$$x\in Q_0\setminus\cup\cC''_m \subset Q_0\setminus\cup\cC'_m = Q_0\setminus\cup\{Q \in \cD(Q_0): |a(f)_Q|\ > \lambda_m\},$$
 then there is a sequence of dyadic cubes $\left\{Q_i\right\}$ containing $x$, with $\ell(Q_i)\le 2^{-i}\ell(Q')$, such that 
\[
l(Q_i)^{\frac{\gamma}{p}}|\fint_{Q_i}fd\mu|\le\lambda_m.
\]
Letting $i \ra \infty$ (with $m$ fixed), since in this case $\gamma<0$, then $l(Q_i)^{\frac{\gamma}{p}}\to\infty$ and therefore we must have $|\fint_{Q_i}fd\mu|\to0$.  Applying the  Lebesgue differentiation theorem gives $f(x)=0$ for $\mu$-almost every point in $Q_0\setminus\cup\cC''_m$.\\

Now we set
$$f_m:=\sum_{Q\in\cC''_m}\chi_Q\Big|\fint_{Q}fd\mu\Big| $$
and let $m \ra \infty$, following the same steps as in Case 1, to complete the proof.
\end{proof}

In the bi-parameter setting, we can define $\|a\|_{l_\gamma^{p,\infty}(\rn, \mu)}$ for a sequence $\{a_R\}$ indexed by dyadic rectangles $R \in \cD_n \times \cD_m$ by replacing $\ell(Q)$ in \eqref{gampos} and \eqref{gamneg} with the  mean sidelength $\lab(R)$ defined in \eqref{lab}.  As we already pointed out,  $\lab(R)$ reduces to $\ell(Q)$ when $R = Q$ is a cube, so the fact that $\cD(\rnm) \subset \cD_n \times \cD_m$ means that Theorem~\ref{cubelptheorem} implies its analogue in the two-parameter setting.
\section{Generalized weak quasi-norms in the continuous setting}
\label{Continuous}
Analogously to the case described in Section~\ref{Section1}, one can define weak-type quasi-norms in the continuous setting, on the upper half-space, and ask what is the relation between the dyadic weak-type quasi-norms we have been considering and their continuous versions.

Assume $\mu$ is doubling, $\gamma \neq 0$, and define a measure on $\Rnp$ by
\[
d\nu_{\gamma}:=\frac{d\mu dt}{t^{1+\gamma}}.
\]
 Fix $1\le p<\infty$.  With an abuse of notation, we use $a$ to denote both a sequence indexed by dyadic cubes, given by
$$a_Q := \ell(Q)^{\frac \gamma p} \omega(Q), \quad Q \in \cD,$$
and a function on the upper half-space, given by
$$a(x,t) := t^{\frac \gamma p} \omega(B(x,t)), \quad (x,t) \in \Rnp,$$
where $\omega$ is nonnegative function defined on cubes and balls in $\Rn$.  In the examples we are considering, $\omega$ will involve either the mean or the mean oscillation of some function $f \in \Loneloc(\Rn)$ on the relevant sets.

We want to relate the dyadic quasi-norms $\|a\|_{l_\gamma^{p,\infty}(\rn, \mu)}$ and $[a]_{l_\gamma^{p,\infty}(\rn, \mu)}$ with their continuous versions $\|a\|_{L^{p,\infty}(\Rnp, \nu_p)}$ and $[a]_{L^p(\Rnp,d\nu_{\gamma})}$.  This amounts to comparing the measures of the corresponding level sets, that is
$$\sum_{Q \in \cD, a_Q >\lambda}\ell(Q)^{-\gamma}\mu(Q) \mbox{ and } \nu_\gamma\left( \left\{(x,t) \in \Rnp: a(x,t) > \lambda\right\}\right).$$

To this aim let $Q$ be a dyadic cube such that $a_Q>\lambda$. Set $\hat{Q}=Q\times[\ell(Q)\sqrt{n}, 2\ell(Q)\sqrt{n}]$ and note that for $(x,t)\in \hat{Q}$ we have $t \approx \ell(Q)$ and, denoting the center of $Q$ by $x_Q$, 
$$B(x_Q, \ell(Q)/2) \subset Q \subset B(x,t) \subset B(x_Q, 5\ell(Q)\sqrt{n}/2) \subset 2^kB(x_Q, \ell(Q)/2)$$ 
for some $k$ depending on $n$.
Thus using the doubling assumption \eqref{doubling} on  $\mu$ we get
	\[
	  \mu(B(x,t))\approx \mu(Q).
	\]
If, under these conditions, $\omega$ satisfies 
\begin{equation}
\label{monotone}
\omega(Q) \lesssim \omega(B(x,t)),
\end{equation}
then we can conclude that for some constant $c_1$ we have
	\[
	a(x,t)>c_1 a_Q>c_1\lambda,\quad (x,t)\in \hat{Q}.
	\]
Next, we note that
\[
\nu_{\gamma}(\hat{Q})\approx \mu(Q)\ell(Q)^{-\gamma}.
\]
Moreover, for two different dyadic cubes $Q_1$ and $Q_2$ ,the sets $\hat{Q}_1$ and $\hat{Q}_2$ are disjoint in the upper half space.  Thus
	\[
	\sum_{a_Q>\lambda}\mu(Q)\ell(Q)^{-\gamma}\approx\sum_{a_Q>\lambda}\nu_{\gamma}(\hat{Q})\le \nu_\gamma\left( \left\{(x,t) \in \Rnp: a(B(x,t)) >c_1\lambda\right\}\right),
	\]
which gives that
\begin{equation}
\label{contidaydicversion}
	\sum_{Q \in \cD, a(Q) >\lambda}\ell(Q)^{-\gamma}\mu(Q) \lesssim \nu_\gamma\left( \left\{(x,t) \in \Rnp: a(x,t) >c_1\lambda\right\}\right).
\end{equation}	

	For the reverse inequality, let  $k\in \Z$, set 
	\[
	E_k=\left\{a>\lambda\right\}\cap\rn\times(2^k,2^{k+1}],\quad 
	\]
	and suppose $F_k$ is the projection of $E_k$ on $\rn$. Note that for each $x\in F_k$ there exists a ball $B(x,t)$ with $t\in(2^k, 2^{k+1}]$ such that $a(x,t)>\lambda$, and thus an application of the Vitali's covering lemma gives us a sub-collection of disjoint balls $\cB_k = \left\{B\right\}$ such that 
	$$F_k\subset \bigcup_{B \in \cB_k} 3B.$$
	From this we obtain
	\[
	\nu_{\gamma}(E_k)\le \nu_{\gamma}(F_k\times(2^k,2^{k+1}])\lesssim 2^{-k\gamma}\sum_{B \in \cB_k}  \mu(B).
		\]
	Cover each $B = B(x,t)\in \cB_k$ by at most $3^n$ dyadic cubes $\{Q_i\}$ of sidelength comparable to $t$.  If under these conditions $\omega$ satisfies
\begin{equation}
\label{subadditive}
\omega(B(x,t)) \lesssim \max_i \omega(Q_i),
\end{equation}	
	then there is a constant $c_2$ such that for at least one of these cubes $Q_i$ we must have $a_{Q_i}>c_2\lambda$.  Denoting this choice by $Q_B$, we have, by doubling,
	$$\nu_{\gamma}(E_k)\lesssim \sum_{B \in \cB_k} \ell(Q_B)^{-\gamma}\mu(Q_B).$$
Since  the balls in $\cB_k$ are disjoint and have radii 
	in $(2^k, 2^{k+1}]$, we can take all the $Q_B$ to have the same sidelength, and there is a uniform bound on $\#\{B \in \cB_k: Q_B = Q\}$ for $Q \in \cD$. As we vary $k$, the sidelengths of the $Q_B$ will be 	different. Summation over $k\in \Z$ thus gives us
\begin{equation}
\label{contidaydicversion2}
	\nu_{\gamma}\left(\left\{a(x,t)>\lambda\right\}\right)
	\lesssim \sum_{Q \in \cD, a_Q>c_2\lambda}\ell(Q)^{-\gamma}\mu(Q).
\end{equation}

Based on the above analysis, we would like to formulate the relationship between the continuous and the dyadic weak-type quasi-norms in some special cases. 
In order to do so, we need to replace the classical system of dyadic cubes, $\mathcal{D}$, by a more general notion of a dyadic system of cubes defined as follows.
 
 \begin{definition}
 	A dyadic system of cubes $\mathcal{D'}$ in $\rn$ is a collection of cubes with the following properties:
 	\begin{itemize}
 		\item Every two cubes in $\mathcal{D'}$ are either disjoint or nested.
 		\item If $Q\in \mathcal{D'}$, then the children of $Q$ (the cubes obtained by bisecting its sides) belong to $\mathcal{D'}$.
 		\item Any cube in $\mathcal{D'}$ is a child of another cube in $\mathcal{D'}$, called its parent.
 		\item Cubes of a fixed length partition the entire space $\rn$.	
 	\end{itemize}
 \end{definition}
In the results proven up to now,  these are the only properties of the classical dyadic system $\cD$ that we have used. Therefore, for any dyadic system $\mathcal{D'}$, a sequence of numbers $a=\{a_Q\}_{Q\in\mathcal{D'}}$, and a function $f$, we can define the norms $[a]_{l_\gamma^{p,\infty}(\rn, \mu, \mathcal{D'})}$ and $\|f\|_{\Op(\Rn, \mathcal{D'})}$ and obtain the same conclusions.

Such generalized dyadic systems, called {\em dyadic flitrations} in \cite{CondeAlonso} or {\em dyadic lattices}  in \cite{LernerNazarov}, can be taken to have a property which is missing from the classical dyadic system:  the existence of a finite family of dyadic systems $\{\cD_j\}_{j = 1}^m$ and a constant $C = C_{m,n}$ such that
\begin{equation}
\label{eqn-3lattice}
\mbox{for every ball } B, \exists j \in \{1,\ldots, m\} \mbox{ and a cube } Q\in\mathcal{D}_j \mbox{ with }
B\subset Q \mbox{ and } |Q| \le C|B|.
\end{equation}
We refer to \cite{CondeAlonso} for an efficient construction of such family of dyadic systems with $m=n+1$, or to the Three Lattice Theorem \cite[Theorem 3.1]{LernerNazarov} for a different, easier construction with $m=3^n$.

The existence of such a family allows us to state parts (ii) and (iii) of the following result.

\begin{proposition}
\label{prop-contdyadic}
Let $f\in \Loneloc(\rn, \mu)$, $1 \le p < \infty$. 

\noindent
{\rm (i)} Define $a(f)_Q$, $Q \in \cD$, as in Theorem~\ref{embeddings} and set
$$a(f)(x,t):=t^{\frac{\gamma}{p}}\fint_{B(x,t)}f d\mu.$$
If $f$ is nonnegative then
$$\|a(f)\|_{l_\gamma^{p,\infty}(\rn, \mu)} \approx \|a(f)\|_{L^{p,\infty}(\Rnp, \nu_p)}$$
and
$$[a(f)]_{l_\gamma^{p,\infty}(\rn, \mu)} \approx [a(f)]_{L^p(\mathbb{R}_+^{n+1},d\nu_{\gamma})}.$$

\medskip
\noindent
Let  $\mu$ be Lebesgue measure and $\{\cD_j\}_{j = 1}^{n+1}$ be a family of dyadic systems for which \eqref{eqn-3lattice} holds.  
\noindent
{\rm (ii)} Define $b(f)_Q$, $Q \in \cD_j$, as in Theorem~\ref{embeddings}, and set
$$b(f)(x,t):=t^{\frac{\gamma}{p}}O(f,B(x,t)).$$
Then 
\begin{equation}
\label{continousdyadic}
\|f\|_{\Opgamma(\rn)} = \|b(f)\|_{l_\gamma^{p,\infty}(\rn)} \lesssim \|b(f)\|_{L^{p,\infty}(\Rnp, \nu_{\gamma})} \lesssim \max_{j=1,..,n+1}\|b(f)\|_{l_\gamma^{p,\infty}(\rn, \cD_j)}.
\end{equation}

\medskip
\noindent
{\rm (iii)} Let $\gamma = p$. Define $\|f\|_{\Op(\Rn)}$ and $\|m(f)\|_{L^{p,\infty}(\Rnp, \nu_p)}$ as in Section~\ref{Section1}, and $\|f\|_{\Op(\Rn, \cD_j)}$ by analogy with \eqref{Opgamma}.  Then 
$$\|f\|_{\Op(\Rn)} \lesssim \|m(f)\|_{L^{p,\infty}(\Rnp, \nu_p)} \lesssim \max_{j=1,..,n+1}\|f\|_{\Op(\Rn, \cD_j)}.$$
\end{proposition}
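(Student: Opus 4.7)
The plan is to derive all three parts by specializing the general level-set comparison \eqref{contidaydicversion}--\eqref{contidaydicversion2} from the text preceding the statement. Part (i) corresponds to $\omega(E) = \fint_E f\,d\mu$ on cubes and balls, and parts (ii)--(iii) to $\omega(E) = O(f,E)$. In each case I would verify the hypotheses \eqref{monotone} and \eqref{subadditive} and then transcribe the resulting level-set equivalence into a quasi-norm equivalence by multiplying by $\lambda^p$ and taking the appropriate $\sup_{\lambda>0}$, $\liminf_{\lambda\to 0^+}$, or $\limsup_{\lambda\to\infty}$.

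For part (i), nonnegativity of $f$ together with $\mu(Q)\approx \mu(B(x,t))$ (from doubling, since $t\approx \ell(Q)$ when $(x,t)\in \hat Q$) immediately gives $\fint_Q f\,d\mu \lesssim \fint_{B(x,t)} f\,d\mu$, which is \eqref{monotone}. For \eqref{subadditive}, cover $B(x,t)$ by at most $3^n$ dyadic cubes $Q_i$ of sidelength comparable to $t$; then $\int_{B(x,t)} f \le \sum_i \int_{Q_i} f \le 3^n \max_i \int_{Q_i} f$ and doubling turns this into $\fint_{B(x,t)} f \lesssim \max_i \fint_{Q_i} f$. Hence both \eqref{contidaydicversion} and \eqref{contidaydicversion2} apply and combine, up to harmless multiplicative constants on $\lambda$, to give the claimed equivalences for the $\|\cdot\|$ and $[\cdot]$ quasi-norms.

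For parts (ii) and (iii), the monotonicity \eqref{monotone} still holds: when $Q\subset B$ with $|Q|\approx |B|$, the familiar triangle-inequality estimate
$$O(f,Q) \le \fint_Q |f - f_B|\,dx + |f_Q - f_B| \lesssim \tfrac{|B|}{|Q|}\, O(f,B) \lesssim O(f,B)$$
gives the one-sided bound needed in \eqref{contidaydicversion}, yielding $\|f\|_{\Opgamma(\rn)} \lesssim \|b(f)\|_{L^{p,\infty}(\Rnp,\nu_\gamma)}$ and, with $\gamma=p$, $\|f\|_{\Op(\rn)} \lesssim \|m(f)\|_{L^{p,\infty}(\Rnp,\nu_p)}$.

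The main obstacle is the reverse direction, where \eqref{subadditive} genuinely fails for the mean oscillation: a ball can have much larger oscillation than any of the cubes in a cover of comparable size coming from a single dyadic grid. The remedy is to invoke the family $\{\cD_j\}_{j=1}^{n+1}$ satisfying \eqref{eqn-3lattice}. For each ball $B = B(x,t)$ in the Vitali family $\cB_k$ arising from the argument leading to \eqref{contidaydicversion2}, I would select a single cube $Q_B\in \cD_{j(B)}$ with $B\subset Q_B$ and $|Q_B|\lesssim |B|$; the same triangle-inequality estimate as above, with the roles of $Q$ and $B$ swapped, gives $O(f,B)\lesssim O(f,Q_B)$, so $b(f)(x,t)\lesssim b(f)_{Q_B}$ and $\ell(Q_B) \approx t$. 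Splitting the resulting sum over $B$ according to the value of $j(B)$ and noting that within each $\cD_j$ the cubes $Q_B$ of a fixed generation appear with bounded multiplicity (since the $B$'s are disjoint and of comparable radius), I expect to obtain
$$\nu_\gamma\bigl(\{b(f)>\lambda\}\bigr) \lesssim \sum_{j=1}^{n+1}\ \sum_{\substack{Q\in\cD_j \\ b(f)_Q>c\lambda}} \ell(Q)^{-\gamma}|Q| \lesssim (n+1)\,\lambda^{-p}\max_j \|b(f)\|_{l^{p,\infty}_\gamma(\rn,\cD_j)}^p,$$
which after multiplying by $\lambda^p$, taking $\sup_{\lambda>0}$, and raising to $1/p$ produces the second inequality of \eqref{continousdyadic}. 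Part (iii) follows verbatim with $\gamma=p$ and $\|f\|_{\Op(\rn,\cD_j)}$ in place of $\|b(f)\|_{l^{p,\infty}_\gamma(\rn,\cD_j)}$.
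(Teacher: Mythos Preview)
Your proposal is correct and follows essentially the same route as the paper's proof: both verify \eqref{monotone} for $\omega = \fint f$ (using nonnegativity and doubling) and for $\omega = O(f,\cdot)$ (via $O(f,Q)\le 2\fint_Q|f-f_B|\lesssim O(f,B)$ when $Q\subset B$, $|Q|\approx|B|$), and both handle the failure of \eqref{subadditive} for oscillations by invoking the family $\{\cD_j\}$ from \eqref{eqn-3lattice}, selecting for each Vitali ball $B$ a single $Q_B\in\cD_{j(B)}$ with $B\subset Q_B$ and $\ell(Q_B)\approx t$, and using the bounded-overlap of the $Q_B$ within a fixed generation. The only cosmetic difference is that the paper passes directly to a $\max_j$ over the $n+1$ sums rather than first summing over $j$, but this is the same estimate.
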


\begin{proof}
We begin by proving the inequality \eqref{contidaydicversion} in all three cases.  From the discussion preceding the statement of the proposition, it suffices to verify condition \eqref{monotone} for $\omega$.  Writing $B = B(x,t)$, we verify this in case (i), for $f$ nonnegative, by
$$\omega(Q) :=  \fint_Q f d\mu \le \frac{\mu(B)}{\mu(Q)} \frac{1}{\mu(B)}\int_{B} f d\mu \lesssim \fint_{B} f d\mu = \omega(B).$$
In cases (ii) and (iii) it follows from
$$\omega(Q) := O(f,Q) \le 2\fint_Q |f - f_B| \lesssim \fint_{B} |f  - f_B| = O(f,B) = \omega(B)$$
and
$$\omega(Q) := \ell(Q)^{-1}O(f,Q) \le 2\ell(Q)^{-(n+1)}\int_Q |f - f_B| \lesssim t^{-1}\fint_{B} |f  - f_B| = \omega(B).$$

For the reverse inequality, \eqref{contidaydicversion2}, the condition that is required on $\omega$ is \eqref{subadditive}.  Again this is verified in case (i) by writing
$$\fint_B f d\mu \le  \frac{1}{\mu(B)}\sum_i \mu(Q_i) \fint_{Q_i} f d\mu \le \frac{\mu(\cup Q_i)}{\mu(B)} \max_i \fint_{Q_i} f d\mu \lesssim \max_i \fint_{Q_i} f d\mu,$$
where we have used the doubling property of $\mu$ in the last inequality.

In cases (ii) and (iii), condition \eqref{subadditive} is no longer true with $\omega$ involving the mean oscillation of $f$, since when using $Q_i$ in a single dyadic system, it is possible to have $O(f, B) > 0$ but $O(f,Q_i) = 0$ for all $i$.
However, if we take $\{\cD_j\}_{j = 1}^{n+1}$ to be the family of dyadic systems in the hypothesis of the proposition, then 
for each ball $B$, letting $Q_B$ be a cube in one these systems for which \eqref{eqn-3lattice} holds, we have 
\[
O(f,B)\lesssim O(f,Q_B)
\]

and we can continue as in the discussion preceding the statement of the proposition to obtain, instead of \eqref{contidaydicversion2}, that
$$\nu_{\gamma}\left(\left\{a(x,t)>\lambda\right\}\right)
	\lesssim \max_{j=1,\ldots n+1} \sum_{Q \in \cD_j, a_Q>c_2\lambda}\ell(Q)^{-\gamma}\mu(Q).$$
	
The same arguments will work for part (iii).
\end{proof}

Combining the results of Proposition~\ref{prop-contdyadic} with those of Theorem~\ref{embeddings} and Theorem~\ref{cubelptheorem}, we obtain the following results.  
\begin{corollary}
\label{contembeddings}
Let $f$, $a(f)$, $b(f)$ and $p$ be as in the statement of Proposition~\ref{prop-contdyadic}.

\noindent
{\rm (i)} Assume $f$ is nonnegative and 
\begin{itemize}
	\item $\gamma < 0$, or
	\item $\gamma>d>0$ and $\mu$ satisfies the additional assumption that $\mu(Q) \approx \ell(Q)^d$ for all $Q \in \cD$, or
	\item $\gamma > 0$ and $p > 1$.
\end{itemize}
Then
		$$\|a(f)\|_{L^{p,\infty}(\Rnp, \nu_\gamma)}\lesssim \|f\|_{\Lp(\rn,\mu)}.$$
Conversely, if
\begin{itemize}
	\item $\gamma>0$ or
	\item $\mu$ is doubling and $\gamma<0$,
\end{itemize}	
then	
$$\|f\|_{\Lp(\rn,\mu)} \lesssim  [a(f)]_{L^{p,\infty}(\Rnp, \nu_\gamma)}.$$	

\noindent
{\rm (ii)} Assume $\mu$ is Lebesgue measure and $\gamma\notin [0,n]$. Then 
	\begin{equation}
	\label{jnp}
	\|b(f)\|_{L^{p,\infty}(\Rnp, \nu_\gamma)}
	\lesssim \|f\|_{\JNp(\rn)}.
	\end{equation}
	
\noindent
{\rm (iii)} If $f \in \dot{W}^{1,p}(\rn)$, $1 < p < \infty$, then
$$\|m(f)\|_{L^{p,\infty}(\Rnp, \nu_p)} \lesssim \lpn{\nabla f}.$$
\end{corollary}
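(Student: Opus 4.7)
The plan is to derive each part of Corollary~\ref{contembeddings} as a direct consequence of combining Proposition~\ref{prop-contdyadic}, which links the continuous and dyadic weak-type quantities, with the dyadic embeddings proved in Theorem~\ref{embeddings} and (for the reverse inclusion in (i)) Theorem~\ref{cubelptheorem}. A key enabling observation, already recorded in the text preceding Proposition~\ref{prop-contdyadic}, is that these dyadic theorems transfer verbatim from the classical grid $\cD$ to any generalized dyadic system $\cD'$, since their proofs depended only on the nesting, bisection, and tiling properties shared by all such systems. This allows me to apply them inside each $\cD_j$ in the family $\{\cD_j\}_{j=1}^{n+1}$ provided by \eqref{eqn-3lattice}.

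For part (i), the nonnegativity of $f$ lets me invoke the first equivalence in Proposition~\ref{prop-contdyadic}(i) to replace $\|a(f)\|_{L^{p,\infty}(\Rnp,\nu_\gamma)}$ by $\|a(f)\|_{l_\gamma^{p,\infty}(\rn,\mu)}$ up to constants. In each of the three sub-cases listed in the statement (namely $\gamma<0$; $\gamma>d>0$ with $\mu(Q)\approx\ell(Q)^d$; and $\gamma>0$, $p>1$), the hypotheses of Theorem~\ref{embeddings}(i) are met, so the dyadic quasi-norm is controlled by $\|f\|_{L^p(\rn,\mu)}$, yielding the forward inequality. For the reverse direction, under the hypotheses $\gamma>0$ or ($\mu$ doubling, $\gamma<0$), the second equivalence in Proposition~\ref{prop-contdyadic}(i) reduces the claim to the dyadic estimate $\|f\|_{L^p(\rn,\mu)}\lesssim [a(f)]_{l_\gamma^{p,\infty}(\rn,\mu)}$, which is precisely Theorem~\ref{cubelptheorem}.

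For part (ii), Proposition~\ref{prop-contdyadic}(ii) gives the one-sided bound
\[
\|b(f)\|_{L^{p,\infty}(\Rnp,\nu_\gamma)}\lesssim \max_{j=1,\ldots,n+1}\|b(f)\|_{l_\gamma^{p,\infty}(\rn,\cD_j)}.
\]
Applying the generalized-dyadic-system version of Theorem~\ref{embeddings}(ii) inside each $\cD_j$ yields $\|b(f)\|_{l_\gamma^{p,\infty}(\rn,\cD_j)}\lesssim \|f\|_{\JNp(\rn,\cD_j)}$, and since $\JNp(\rn)$ is defined using the supremum over \emph{all} disjoint collections of cubes, $\|f\|_{\JNp(\rn,\cD_j)}\le \|f\|_{\JNp(\rn)}$, producing \eqref{jnp}. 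Part (iii) is handled in exactly the same way, with Proposition~\ref{prop-contdyadic}(iii) in place of Proposition~\ref{prop-contdyadic}(ii) and Theorem~\ref{embeddings}(iii) in place of Theorem~\ref{embeddings}(ii), using that $\|\nabla f\|_{\Lp(\rn)}$ has no dependence on the dyadic system chosen.

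The only real hurdle is bookkeeping: verifying that for each clause of the corollary the parameter constraints on $\gamma$, $p$, and $\mu$ satisfy the hypotheses of whichever dyadic result is being invoked, and checking that conditions \eqref{monotone} and \eqref{subadditive} on the associated $\omega$ — already verified inside the proof of Proposition~\ref{prop-contdyadic} for the means, the mean oscillations, and the rescaled mean oscillations — deliver the needed comparisons at the continuous level. No new analytic input beyond the results already in the paper is required.
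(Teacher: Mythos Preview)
Your proposal is correct and follows essentially the same approach as the paper: deduce each part by combining the continuous--dyadic comparison in Proposition~\ref{prop-contdyadic} with the appropriate dyadic embedding (Theorem~\ref{embeddings} for the forward inequalities, Theorem~\ref{cubelptheorem} for the reverse in (i)), applying the latter in each system $\cD_j$ and using $\|f\|_{\JNp(\rn,\cD_j)}\le\|f\|_{\JNp(\rn)}$. Your write-up is in fact more detailed than the paper's, which dispatches part (i) in a single sentence and handles (ii) and (iii) exactly as you describe.
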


\begin{proof}
Part (i) follows immediately from the proposition and the theorems.  To prove \eqref{jnp}, by part (ii) of the proposition
$$\|b(f)\|_{L^{p,\infty}(\rn, \nu_{\gamma})} \lesssim \max_{j=1,..,n+1}\|b(f)\|_{l_\gamma^{p,\infty}(\rn, \cD_j)},$$
and applying the result of Theorem~\ref{embeddings} to each dyadic system separately, we get
$$
\|b(f)\|_{l_\gamma^{p,\infty}(\rn, \cD_j)}\lesssim \|f\|_{\JNp(\rn,\cD_j)}.$$
Finally, we observe that $\|f\|_{\JNp(\rn,\cD_j)} \leq  \|f\|_{\JNp(\rn)}$.

The inequality in part (iii) is proved in exactly the same way, replacing $\cD$ by $\cD_j$ in Theorem~\ref{embeddings} to get
$$\|f\|_{\Op(\Rn, \cD_j)} \lesssim \lpn{\nabla f}.$$
\end{proof}

\begin{remark} Note that part (iii) of Corollary~\ref{contembeddings} reproduces one side of the inequality \eqref{frankresult} from \cite[Theorem 1]{Frank}.

A direct proof of part (ii) of Corollary~\ref{contembeddings}, not going through dyadic arguments, can be obtained by simply replacing balls by cubes and the averages $T_t f(x) = \fint_{B(x,t)} f$ by the oscillation $O(f,Q)$ in the proof of the weak-type inequality in \cite[Theorem 2.10]{Oscar}.
\end{remark}
\section{Examples}
\label{Examples}

In this section we present some examples to show that the embeddings proved in the previous sections are proper or may fail for the cases which are not covered.

\begin{claim}
\label{Claim1}
For any $\gamma \in [0,1]$ and $n = p = 1$, inequality \eqref{thefirst} fails to hold.
\end{claim}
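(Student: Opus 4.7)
The approach is to exhibit an explicit $f\in L^1(\R)$ for which the right-hand side of \eqref{thefirst} is finite while the left-hand side is infinite.  The simplest candidate is $f=\chi_{[0,1)}$, which has $\|f\|_{L^1}=1$.  The first step is to enumerate the dyadic cubes $Q\in\cD$ for which $a(f)_Q\neq 0$.  There are exactly two families: the dyadic subcubes $Q\subseteq[0,1)$ of length $2^{-m}$ ($m\ge 0$), with $\fint_Q f=1$, $a(f)_Q=2^{-m\gamma}$, and weight $\ell(Q)^{-\gamma}|Q|=2^{m(\gamma-1)}$ (there are $2^m$ such cubes at generation $m$), and the supercubes $Q=[0,2^k)$ ($k\ge 0$), with $\fint_Q f=2^{-k}$, $a(f)_Q=2^{k(\gamma-1)}$, and weight $\ell(Q)^{-\gamma}|Q|=2^{k(1-\gamma)}$ (one cube at each level).

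Next, I would split the argument according to $\gamma$.  For $\gamma=0$, every subcube satisfies $|a(f)_Q|=1$, so for any $\lambda\in(0,1)$ the level-set sum receives a contribution of $2^m\cdot 2^{-m}=1$ from each generation $m\ge 0$, and hence $\sum_{|a(f)_Q|>\lambda}\ell(Q)^{-\gamma}|Q|=+\infty$.  For $\gamma=1$, every supercube $[0,2^k)$ satisfies $|a(f)_Q|=1$ and contributes $1$, so summing over $k\ge 0$ again gives $+\infty$.  In both endpoint cases $\|a(f)\|_{l_\gamma^{1,\infty}(\R)}=\infty>1=\|f\|_{L^1}$, refuting \eqref{thefirst}.

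For $\gamma\in(0,1)$, $\chi_{[0,1)}$ is \emph{not} a counterexample: the subcube sum up to the cutoff $m<\log_2(1/\lambda)/\gamma$ contributes $\sum_m 2^{m\gamma}\sim 1/\lambda$, and the supercube sum up to $k<\log_2(1/\lambda)/(1-\gamma)$ contributes $\sum_k 2^{k(1-\gamma)}\sim 1/\lambda$, so that $\lambda\cdot\sum$ is only a $\gamma$-dependent constant.  For this interior range I would instead look for a $\gamma$-adapted $f$, perhaps a superposition of scaled indicators at many widely separated scales, designed so that cubes at intermediate sizes pick up cumulative mass from many scales simultaneously.  The main obstacle is precisely this construction: the natural multi-scale candidates I can think of (nested bumps, geometric towers, self-similar constructions) are all essentially scale-invariant and reproduce the same bounded ratio as $\chi_{[0,1)}$.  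A genuine counterexample in $(0,1)$ must therefore exploit a non-trivial interaction between scales that defeats the scale-invariance; identifying that interaction is where the proof would require its main new idea.
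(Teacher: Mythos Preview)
Your endpoint cases $\gamma=0$ and $\gamma=1$ are correct, and for $\gamma=1$ your example $f=\chi_{[0,1)}$ is in fact simpler than the paper's.  The paper's Example~1 builds a tower $\sum_n n^{-2}2^{n^3}\chi_{[0,2^{-n^3}]}$ and tracks the integrals over containing intervals, whereas you just observe that $a(\chi_{[0,1)})_{[0,2^k)}=1$ for every $k\ge 0$ with weight $\ell^{-1}|Q|=1$, so the level-set sum at any $\lambda<1$ is already $+\infty$.  (The paper's construction additionally gives $[a(f)]_{l_1^{1,\infty}}=\infty$ via a sequence $\lambda_n\to 0$ while staying inside $[0,1]$, but that extra information is not needed for Claim~\ref{Claim1} as stated.)

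The genuine gap is the interior range $\gamma\in(0,1)$, which you explicitly leave open.  Your diagnosis that $\chi_{[0,1)}$ and naively self-similar towers only give $\lambda\cdot\sum\sim C_\gamma$ is correct; the missing idea is a construction that is \emph{not} scale-invariant.  The paper's Example~2 does this with a Cantor-type scheme: fix generations $n_1<n_2<\cdots$ so that $q_k:=\lfloor 2^{n_k(1-\gamma)}\rfloor\ge 2^k$, and inside each interval of the previous collection $\cC_{k-1}$ select exactly $q_k$ subintervals of length $2^{-n_k}$ times the parent length, obtaining $\cC_k$.  Set $f=\sum_k k^{-1-\alpha}2^{\gamma(n_1+\cdots+n_k)}\sum_{I\in\cC_k}\chi_I$ with $0<\alpha<1$.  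Two features drive the argument.  First, $\#\cC_k\cdot\ell(I)^{1-\gamma}=q_1\cdots q_k\,2^{-(1-\gamma)(n_1+\cdots+n_k)}\approx 1$ for every $k$, so each generation contributes a uniformly positive amount to the weighted count.  Second, for $I\in\cC_k$ one has $a(f)_I\gtrsim\sum_{i\ge 1}(k+i)^{-1-\alpha}\approx k^{-\alpha}=:\lambda_k$, because the deeper generations inside $I$ all contribute mass.  Combining, $\lambda_k\sum_{a(f)_I>\lambda_k}\ell(I)^{1-\gamma}\gtrsim k^{-\alpha}\cdot k=k^{1-\alpha}\to\infty$.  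The ``non-trivial interaction between scales'' you were looking for is exactly this tuning of the branching number $q_k$ to the exponent $1-\gamma$, which keeps the weighted count per generation of order one while the coefficients still decay fast enough to put $f$ in $L^1$.
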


\setcounter{example}{-1}
\begin{example}
\label{Example 0}
\end{example} For the sake of completeness, we note that for $\gamma = 0$, any $f$ with $f > \lambda$ on some $J \in \cD$ for some $\lambda > 0$ gives
$$\|a(f)\|_{l_1^{1,\infty}[0,1]} \ge \lambda^p \sum_{|a(f)_I|>\lambda}\ell(I) = \lambda^p \sum_{|f_I|>\lambda}\ell(I) \ge   \lambda^p \sum_{I \in \cD(J)}\ell(I) = \infty.$$
The norm can be shown to be infinite also for any $f$ which is nonzero on a set of positive measure.

\begin{example}
\label{Example 1}
\end{example} Let $\gamma=1= p$ and consider the function
\[
f=\sum_{n=1}^{\infty}\frac{2^{n^3}}{n^2}\chi_{[0,2^{-n^3}]},
\]
which belongs to $L^1[0,1]$. Note that for each $n$, and every dyadic interval $I\subset [0,1]$ which contains $[0,2^{-n^3}]$, we have
\[
a(f)_I = \ell(I)\fint_{I}f\ge\int_{[0,2^{-n^3}]}f>\frac{1}{n^2},
\]
which implies that for $\lambda_n=\frac{1}{n^2}$ we have
\[
\lambda_n\sum_{a(f)_I>\lambda_n}\ell(I)^{1-\gamma}\ge \frac{\#\{I\in \cD([0,1]) : I \supset [0,2^{-n^3}]\}}{n^2} \ge n.
\]
So in this case $ \left[a(f)\right]_{l_1^{1,\infty}[0,1]}=\infty$ and hence $\|a(f)\|_{l_1^{1,\infty}[0,1]} = \infty$.

\begin{example}
\label{Example 2}
\end{example} This time we provide an example of the failure of \eqref{thefirst} in the case $0<\gamma<1$.  
Take a sequence of natural numbers $\left\{n_k\right\}$ such that $2^{n_k(1-\gamma)} \ge 2^k$.  If $\lfloor x \rfloor$ denotes the greatest integer less than or equal to $x$, and
$\{x\} = x - \lfloor x \rfloor$ denotes its fractional part, then we set
\begin{equation}
\label{numbersconditions}
q_k := \lfloor2^{n_k(1-\gamma)}\rfloor, \quad \frac{q_k}{2^{n_k(1-\gamma)}} =1 -\varepsilon_k, \quad \varepsilon_k = \frac{\{2^{n_k(1-\gamma)}\}}{2^{n_k(1-\gamma)}}\le 2^{-k},\quad k\in \N.
\end{equation}

Let $\cC_1$ be a subcollection of  $\cD_{n_1}([0,1])$ containing exactly $q_1$ intervals. Suppose now we have constructed $\cC_i$, then $\cC_{i+1}$ is obtained by selecting exactly $q_{i+1}$ intervals in $\cD_{n_{i+1}}(I)$ for each $I \in \cC_i$. From the construction for $C_k$ we have
\begin{equation}\label{collectionproperties}
	\#\cC_{k}=q_1\cdots q_k, \quad \ell(I)=2^{-(n_1+ \ldots +n_k)}, \quad I\in\cC_k.
\end{equation}

Now, pick $0<\alpha<1$ and let
\[
f:=\sum_{k=1}^{\infty}\frac{2^{\gamma(n_1+ \ldots +n_k)}}{k^{1+\alpha}}\sum_{I\in\cC_k}\chi_I.
\]
From \eqref{numbersconditions} and \eqref{collectionproperties} it follows that 
\[
|\cup\cC_k|=q_1\cdots q_k2^{-(n_1+ \ldots +n_k)} \le 2^{-\gamma(n_1+ \ldots +n_k)},
\]
which implies that $f\in L^1[0,1]$. 

Next, we fix $k \in \N$, $I\in \cC_k$, and estimate $a(f)_I$ from below. To do this we note that for each natural number $i$ the total number of intervals of $\cC_{k+i}$ inside $I$ is $q_{k+1}\cdots q_{k+i}$. From this observation and \eqref{numbersconditions} it follows that
\begin{align*}
	a(f)_I&=\ell(I)^{\gamma}\fint_{I}f\ge 2^{-(n_1+ \ldots +n_k)(\gamma-1)}\sum_{i=1}^{\infty}\frac{q_{k+1}\cdots q_{k+i}}{(k+i)^{1+\alpha}}2^{-(n_1+ \ldots +n_{k+i})(1-\gamma)}\\
	&=\sum_{i=1}^{\infty}\frac{(1-\varepsilon_{k+1})\cdots(1-\varepsilon_{k+i})}{(k+i)^{1+\alpha}} \ge \prod_{l=1}^{\infty}(1-2^{-l})\sum_{i=1}^{\infty}\frac{1}{(k+i)^{1+\alpha}}.
	\end{align*}
Denoting half of the quantity on the right-hand side by $\lambda_k$, we have that $\lambda_k \approx k^{-\alpha}$, and for $1 \le j \le k$ and $I\in\cC_j$, $a(f)_I > \lambda_j > \lambda_k$, so 
\begin{align*}
	\lambda_k\sum_{a(f)_I>\lambda_k}\ell(I)^{1-\gamma} &\gtrsim k^{-\alpha}\sum_{j=1}^{k}\sum_{I\in\cC_j}\ell(I)^{1-\gamma}
	=k^{-\alpha}\sum_{j=1}^{k}q_1\cdots q_j 2^{-(n_1+ \ldots +n_j)(1-\gamma)}\\
	&=k^{-\alpha}\sum_{j=1}^{k}(1-\varepsilon_1)\cdots (1-\varepsilon_k) \ge k^{-\alpha}\sum_{j=1}^{k}(1-2^{-1})\cdots (1-2^{-k})\gtrsim k^{1-\alpha}.
\end{align*}
Again this shows that $\left[a(f)\right]_{l_\gamma^{1,\infty}[0,1]}=\infty$.

\begin{claim}
\label{Claim2}
The inclusion given by \eqref{thesecond}  is proper, i.e.\ $\|f\|_{\JNp(\Rn,\cD)} \lesssim \|b(f)\|_{l_\gamma^{p,\infty}(\Rn)}$ fails to hold.  Moreover, we cannot replace $\|f\|_{\JNp(\Rn,\cD)}$ in \eqref{thesecond} by $\|f\|_{L^{p,\infty}}$, and furthermore the norm $\|b(f)\|_{l_\gamma^{p,\infty}(\Rn)}$ is not invariant under rearrangements.
\end{claim}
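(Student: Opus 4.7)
The three assertions in Claim~\ref{Claim2} are negative statements, each requiring an explicit counterexample. The unifying structural observation is that $\|f\|_{\JNp(\rn,\cD)}^p$ is a strong $l^p$-type sum over \emph{disjoint} collections of dyadic cubes, whereas $\|b(f)\|_{l^{p,\infty}_\gamma(\rn)}^p$ is a \emph{weak} $l^p$-quasi-norm over \emph{all} dyadic cubes weighted by $\ell(Q)^{-\gamma}|Q|$, while $\|f\|_{L^{p,\infty}(\rn)}$ depends only on the distribution function of $f$ and is blind to its position relative to the dyadic grid.

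For the properness of the inclusion $\JNp(\rn,\cD)\subset\Opgamma(\rn)$, my plan is to construct $f=\sum_k c_k\chi_{E_k}$ with $\{E_k\}$ a disjoint family of dyadic cubes, tuning the sidelengths and heights so that $\{b(f)_Q\}_{Q\in\cD}$ sits on the borderline between weak and strong $l^p_\gamma$. The prototype is to arrange, at each dyadic level $2^{-j}$, roughly $2^{j(\gamma-n)}\cdot j^{-1}$ cubes on which $b(f)_Q\approx j^{-1/p}$. Then the level-set weight at the threshold $\lambda\approx j^{-1/p}$ stays bounded, keeping $\|b(f)\|_{l^{p,\infty}_\gamma}<\infty$, whereas the strong $\JNp$ sum over the disjoint collection $\{E_k\}$ collapses to a divergent harmonic-type series, so $\|f\|_{\JNp}=\infty$.

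For the failure of $\|b(f)\|_{l^{p,\infty}_\gamma}\lesssim\|f\|_{L^{p,\infty}}$, I need an $f\in L^{p,\infty}(\rn)$ (in fact one can try $L^\infty$) with $\|b(f)\|_{l^{p,\infty}_\gamma}=\infty$. In the regime $\gamma>np$, the bounded indicator $f=\chi_{[0,1]^n}$ already works: the nested dyadic ancestors $Q\supset[0,1]^n$ of sidelength $\ell$ satisfy $O(f,Q)\approx\ell^{-n}$, so $b(f)_Q\approx\ell^{(\gamma-np)/p}\to\infty$, and summing the weights $\ell^{n-\gamma}$ over these ancestors gives a divergent level-set sum. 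In the complementary sub-regimes $\gamma\in(n,np]$ and $\gamma<0$, a single indicator no longer suffices, because either the small or large cubes alone give only a bounded contribution; here one must tune a countable disjoint superposition of translated indicators at widely separated scales so that $f\in L^{p,\infty}$ but many dyadic cubes at every relevant scale carry non-trivial oscillation, making the weak-$l^p_\gamma$ sum diverge.

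For the non-invariance under rearrangements, the plan is to compare $f=\chi_{[0,1/2]^n}$ with the translate $g=\chi_{[1/4,3/4]^n}$: the two are equidistributed, but $f$ \emph{is} a dyadic cube so $O(f,Q)=0$ for every dyadic $Q\subsetneq[0,1/2]^n$, whereas $g$ straddles the main dyadic bisection and has $O(g,Q)\gtrsim 1$ on many dyadic cubes at every scale below $1$. Replacing this single translation by a self-similar superposition of countably many disjoint rescaled copies placed either well inside dyadic cubes or across dyadic hyperplanes, while preserving equidistribution, makes the ratio $\|b(g)\|_{l^{p,\infty}_\gamma}/\|b(f)\|_{l^{p,\infty}_\gamma}$ arbitrarily large, ruling out any equivalence constant. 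The principal technical obstacle common to all three parts is that the signs of the exponents $\gamma/p$, $\gamma/p-n$ and $\gamma-n$ flip across the break points $\gamma=0$, $\gamma=n$ and $\gamma=np$, so the dominant dyadic scale (small cubes at the boundary versus large nested ancestors) shifts from regime to regime; covering the full range $\gamma\in\R\setminus[0,n]$ therefore typically requires distinct parameter choices, and the level-set computation in each case is the technical heart of the argument.
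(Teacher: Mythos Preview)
Your approach to the second assertion contains a concrete error. You claim that for $\gamma > np$ the indicator $f = \chi_{[0,1]^n}$ has $\|b(f)\|_{l^{p,\infty}_\gamma} = \infty$ because ``summing the weights $\ell^{n-\gamma}$ over the ancestors gives a divergent level-set sum.'' But for $\gamma > n$ that sum is a convergent geometric series: there is exactly one ancestor at each scale $2^k$, and $\sum_{k \ge 1} 2^{k(n-\gamma)} < \infty$. More decisively, $\chi_{[0,1]^n} \in L^p(\Rn) \subset \JNp(\Rn,\cD)$, so the very inequality \eqref{thesecond} you are probing forces $\|b(\chi_{[0,1]^n})\|_{l^{p,\infty}_\gamma} < \infty$. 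The same obstruction rules out \emph{any} bounded compactly supported $f$ as a counterexample. Your fallback ``countable disjoint superposition of translated indicators at widely separated scales'' is not worked out, and it is not clear how to place such a function in $L^{p,\infty}\setminus\JNp$ while making the weak oscillation norm diverge.

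The paper's argument is quite different and handles all three assertions with one family of functions. For properness it takes $f(x)=x^{-1/p}\chi_{[0,1]}$, already known to lie in $L^{p,\infty}\setminus\JNp([0,1],\cD)$, and verifies directly (trivial bound near $0$, Poincar\'e inequality elsewhere) that $\|b(f)\|_{l^{p,\infty}_\gamma}<\infty$ for $\gamma\ne1$. Then for the other two assertions it introduces the sign-alternating rearrangements
\[
f_k(x)=x^{-1/p}\sum_{m=0}^{2^k-1}(-1)^m\chi_{[m2^{-k},(m+1)2^{-k})}.
\]
Since $|f_k|=f$, all $f_k$ are equidistributed with fixed $L^{p,\infty}$ norm, but the many sign changes force $\|b(f_k)\|_{l^{p,\infty}_\gamma}\to\infty$ as $k\to\infty$. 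This single sequence simultaneously disproves the $L^{p,\infty}$ bound and rearrangement invariance. Your plan of building three separate constructions, with regime-dependent parameter choices, might be salvageable with substantial additional work, but the explicit example you give in part~2 is incorrect, and the constructions in parts~1 and~3 are left at the heuristic level without the controlling estimates.
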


\begin{example}
\label{Example 3}
\end{example} Let $\gamma\ne 1$, $p>1$, and consider the function $f(x)=x^{-\frac{1}{p}}\chi_{[0,1]}$. In \cite[Example 3.5]{ABKY} this is given as an example of a function which is in weak-$L^p$ but not in $\JNp([0,1])$, but the calculation there actually shows it is not in $\JNp([0,1],\cD)$.  

We want to show that $\|b(f)\|_{l_\gamma^{p,\infty}} < \infty$. Let $I=[j2^{-k},(j+1)2^{-k}) \in \cD_k([0,1))$. If $j=0$ we use the trivial bound $O(f, I) \le 2f_I$ to get
\[
b_I(f) = \ell(I)^{\frac{\gamma}{p}}O(f, I)  \lesssim 2^{-k\frac{\gamma}{p}} \fint_{[0,2^{-k}]}x^{-\frac{1}{p}}dx\lesssim 2^{k\frac{1-\gamma}{p}},
\]
and if $j\ge1$ the Poincar\'e inequality gives us
\[
b_I(f)\lesssim 2^{-k(1+\frac{\gamma}{p})}\fint_I x^{-(1+\frac{1}{p})}dx\lesssim 2^{k\frac{1-\gamma}{p}}j^{-(1+\frac{1}{p})}.
\]
So for $\lambda>0$ we have $b_I(f)>\lambda$ only if $2^{k\frac{1-\gamma}{p}}(j+1)^{-(1+\frac{1}{p})}\gtrsim \lambda$, which implies that
\[
\sum_{b(f)_Q>\lambda}\ell(I)^{1-\gamma}\lesssim \sum_{j=0}^{\infty}\sum_{\substack{2^{k(\gamma-1)}\lesssim \\ \lambda^{-p}(j+1)^{-(p+1)}}}2^{k(\gamma-1)}\lesssim \lambda^{-p}\sum_{j=0}^{\infty}(j+1)^{-(p+1)}\lesssim \lambda^{-p}.
\]
This shows that $\|b(f)\|_{l_\gamma^{p,\infty}(\R)} < \infty$.

Setting $F(x_1, \ldots, x_n)=f(x_1)$ and using the fact that for $Q\in \cD(\Rn)$, $O(F,Q)  =  O(f,I)$, where $I$ is the projection of $Q$ on the $x_1$ axis, we obtain an example of a function $F$ for which $\|b(F)\|_{l_\gamma^{p,\infty}(\Rn)} < \infty$, but which is not in $\JNp(\Rn,\cD)$, for $p>1$.

The function $f$ above can be modified to show that  \eqref{thesecond} fails with $\|f\|_{\JNp(\Rn,\cD)}$ replaced by $\|f\|_{L^{p,\infty}(\Rn)}$, and also to show that $\|b(f)\|_{l_\gamma^{p,\infty}(\Rn)}$ is not invariant under rearrangements. 
Let $\gamma\ne 1,0$, and take a large number $k>2$. Then, cut the interval $[0,1]$ into pieces of length $2^{-k}$, and consider the function $f$ defined by
\[
f_k(x)=x^{-\frac{1}{p}}\sum_{m=0}^{2^k-1} (-1)^{m} \chi_\Ikm, \quad \Ikm = [m2^{-k}, (m+1)2^{-k}).
\]
Note that $|f_k(x)| = f(x) = x^{-\frac{1}{p}}\chi_{[0,1]}$ so they have the same distribution function and $\|f_k\|_{L^{p,\infty}(\Rn)} = \|f\|_{L^{p,\infty}(\Rn)}$.  However, as we show below, $\|b(f_k)\|_{l_\gamma^{p,\infty}(\Rn)}$ blows up with $k$.

For any dyadic interval $\Ijm$ with $1\le m<2^{j}$, and $1\le j<k$, we have that on one half of $I$, $f_k \approx 2^{\frac{j}{p}}m^{-\frac{1}{p}}$, and on the other half $f_k$ has the same size but with the opposite sign, which implies that
\[
b_I(f_k)= \ell(I)^{\frac{\gamma}{p}}O(f,I) \approx 2^{\frac{j}{p}(1-\gamma)}m^{-\frac{1}{p}}.
\]

Set 
$$\lambda_k^p = \left\{  \begin{array}{cc}
        \max (1, 2^{-\gamma(k-2)}) &  \gamma<1,\\
        2^{(1-\gamma)(k-2)}  & \gamma>1
    \end{array}
    \right.
\quad 
\mbox {and}
\quad
N_k = \left\{  \begin{array}{cc}
        2^{k-2}&  \gamma<0,\\
        2^{|\gamma-1|(k-2)} & \gamma>0.
    \end{array}
    \right.
$$

Next, note that in all theses cases, for $1<m<N_k$, we may choose $j$ so that
\[
0\le j\le k-1, \quad m<2^j, \quad 2^{j-1}\le (m\lambda_k^p)^{\frac{1}{1-\gamma}}<2^{j},
\] 
and therefore
\[
b_\Ijm(f_k)\approx  \lambda_k, \quad \ell(\Ijm)^{1-\gamma}\approx  \lambda_k^{-p}m^{-1}.
\]
Thus
\[
\|b(f_k)\|^p_{l_\gamma^{p,\infty}(\Rn)} \ge \lambda_k^p\sum_{b_I(f_k)>\lambda_k} \ell(I)^{1-\gamma}\gtrsim \sum_{1 < m < N_k}m^{-1}\approx  \log N_k.
\]
As $k$ increases, $N_k$ grows without bound, so that $\|b(f_k)\|_{l_\gamma^{p,\infty}(\Rn)}$ cannot be bounded by $\|f_k\|_{L^{p,\infty}(\Rn)} = \|f\|_{L^{p,\infty}(\Rn)}$.

\begin{claim}
\label{Claim3}
The inequality \eqref{thesecond} fails when $0<\gamma\le n$, namely $\|b(f)\|_{l_\gamma^{p,\infty}(\Rn)} \lesssim \|f\|_{\JNp(\Rn,\cD)}$ fails to hold, 
and therefore so does the embedding
$\|b(f)\|_{l_\gamma^{p,\infty}(\Rn)} \lesssim \|f\|_{L^{p,\infty}}$.
\end{claim}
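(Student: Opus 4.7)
I would construct, for each $M \ge 1$, a function $f_M$ supported in $Q_0 := [0, 1)^n$ with
\[
\|f_M\|_{\JNp(\Rn, \cD)} \lesssim 1 \qquad \text{but} \qquad \|b(f_M)\|_{l_\gamma^{p,\infty}(\Rn)}^p \gtrsim M,
\]
uniformly in $M$. Letting $M \to \infty$ will then contradict \eqref{thesecond}.

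The construction places the oscillation of $f_M$ on a thinned dyadic tree $\mathcal{S}$ rooted at $Q_0$. At each level $k \ge 1$ I prescribe a branching number $K_k \in \{1, 2, \ldots, 2^n\}$ (i.e., which of the $2^n$ $\cD$-children of each $\mathcal{S}$-cube are included in $\mathcal{S}$) so that the $k$-th generation size $N_k := |\mathcal{S}_k|$ satisfies $N_k \asymp 2^{k(n-\gamma)}$; this is feasible for any $\gamma \in (0, n]$ since the required mean branching $2^{n-\gamma}$ lies in $[1, 2^n)$, and for $\gamma = n$ the tree collapses to a single chain. For each $Q \in \mathcal{S}$ I attach a Haar-type function $h_Q$ supported on $Q$ with mean zero, $\|h_Q\|_\infty \le 1$, and constant on each $\cD$-child of $Q$ (say $h_Q = +1$ on one half of $Q$ and $-1$ on the other, each half a union of $2^{n-1}$ $\cD$-children). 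Then I set
\[
f_M := \sum_{Q \in \mathcal{S},\ \text{level}(Q) \le M} c_Q\, h_Q, \qquad c_Q := 2^{\text{level}(Q)\,\gamma/p}.
\]

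The key analytic input is the oscillation estimate $O(f_M, Q) \asymp c_Q$ for every $Q \in \mathcal{S}$. When $\gamma = n$ the tree is a chain $\{Q_k\}_{k=0}^M$ and the estimate follows from a direct computation, summing the geometric series that appears on the annuli $Q_k \setminus Q_{k+1}$. For $\gamma < n$, the $h_Q$'s are pairwise orthogonal in $L^2$, since each $h_{Q''}$ is constant on every $\cD$-descendant of a larger $\mathcal{S}$-cube; this yields
\[
\fint_Q |f_M - f_{M, Q}|^2 = c_Q^2 + \sum_{Q' \subsetneq Q,\ Q' \in \mathcal{S}} c_{Q'}^2\, \fint_Q h_{Q'}^2,
\]
from which the two-sided bound follows when $p \ge 2$ (the right-hand sum is a convergent geometric series in that range). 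For $p \in [1, 2)$ I would introduce i.i.d.\ random signs $\epsilon_Q$ in front of each $h_Q$; by Khintchine's inequality together with a union bound over the finitely many $Q$ with $\text{level}(Q) \le M$, one realization of the signs achieves $O(f_M^\epsilon, Q) \gtrsim c_Q$ simultaneously for every $Q \in \mathcal{S}$. Verifying this lower bound uniformly in the small-$p$ regime is the principal technical obstacle.

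With the oscillation estimate in hand, $b(f_M)_Q \asymp 1$ for every $Q \in \mathcal{S}$, so taking $\lambda > 0$ smaller than the implicit constant gives
\[
\|b(f_M)\|_{l_\gamma^{p,\infty}}^p \gtrsim \lambda^p \sum_{Q \in \mathcal{S}} \ell(Q)^{n-\gamma} = \lambda^p \sum_{k=0}^M N_k\, 2^{-k(n-\gamma)} \asymp M.
\]
For the JN bound, any dyadic $Q' \subsetneq Q_0$ not in $\mathcal{S}$ has $O(f_M, Q') = 0$: every $h_Q$ with $Q \in \mathcal{S}$, $Q \supsetneq Q'$, is constant on the $\cD$-child of $Q$ containing $Q'$ and hence on $Q'$, while there is no $Q \in \mathcal{S}$ with $Q \subseteq Q'$ (otherwise the $\cD$-chain from $Q_0$ down to $Q$ would force $Q' \in \mathcal{S}$). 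Dyadic cubes $Q' \supsetneq Q_0$ form a chain of ancestors contributing at most one cube to any disjoint family and $O(1)$ in total, since $f_{M, Q'} = 0$ and $\fint_{Q_0} |f_M| \lesssim 1$. Hence the nontrivial part of any disjoint $\cP$ is the antichain $\cP \cap \mathcal{S}$, subject to the leaf-covering constraint $\sum_{Q \in \cP \cap \mathcal{S}} N_M / N_{k(Q)} \le N_M$; combined with $|Q|\, O(f_M, Q)^p \asymp 2^{-k(n-\gamma)} \asymp 1/N_k$ for $Q$ at level $k$, this gives $\sum_{Q \in \cP \cap \mathcal{S}} |Q|\, O(f_M, Q)^p \lesssim 1$. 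Thus $\|f_M\|_{\JNp}^p \lesssim 1$ uniformly in $M$, completing the counterexample.
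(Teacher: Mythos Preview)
Your construction is sound and genuinely different from the paper's: the paper builds a single function $f=\sum_k c_k\sum_{Q\in\cC_k}\chi_Q$ using indicators on a sparse tree obtained by \emph{skipping} $n_k$ dyadic generations at each step (Examples~\ref{Example 4}--\ref{Example 5}), and then works fairly hard to bound the $\JNp$ contribution of cubes outside the tree.  Your use of mean-zero Haar blocks on a level-by-level thinned tree kills those off-tree oscillations entirely, so the $\JNp$ estimate reduces to the clean leaf-counting argument you give.  The paper gets one function with $\|b(f)\|_{l_\gamma^{p,\infty}}=\infty$; you get a sequence with norms $\gtrsim M^{1/p}$.  Both suffice.

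However, your treatment of the two-sided oscillation estimate $O(f_M,Q)\asymp c_Q$ is confused, and the stated route does not work.  The $L^2$ identity you write down gives a lower bound on $\big(\fint_Q|f_M-f_{M,Q}|^2\big)^{1/2}$, but by Jensen this is an \emph{upper} bound on the $L^1$ oscillation $O(f_M,Q)$, not a lower bound; and the proposed Khintchine/union-bound fix for small $p$ would need concentration (not just moment comparison) to make the lower bound hold simultaneously for all $Q\in\mathcal{S}$.  In fact both bounds are elementary for every $p>1$ and need none of this.  For the lower bound, test against $h_Q$: orthogonality of the Haar blocks gives
\[
|Q|\,O(f_M,Q)=\int_Q|f_M-f_{M,Q}|\ge\Big|\int_Q (f_M-f_{M,Q})\,h_Q\Big|=c_Q\int_Q h_Q^2=c_Q|Q|.
\]
For the upper bound, the triangle inequality in $L^1$ yields
\[
O(f_M,Q)\le 2c_Q+2\sum_{j>k}\#\{Q'\in\mathcal S_j:Q'\subset Q\}\,\frac{|Q'|}{|Q|}\,c_{Q'}
\lesssim c_Q\sum_{m\ge0}2^{m\gamma(1/p-1)}\lesssim c_Q,
\]
the geometric series converging precisely because $p>1$.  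With this in place your $\JNp$ and $\|b(f_M)\|$ estimates go through as written.  Finally, you do not address the $L^{p,\infty}$ half of the claim; it follows either from the John--Nirenberg embedding $\JNp(Q_0)\hookrightarrow L^{p,\infty}(Q_0)$ applied to your $f_M$, or from the direct distributional estimate $|\{|f_M|>2^{k\gamma/p}\}|\le|\cup\mathcal S_k|\asymp 2^{-k\gamma}$.
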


\begin{example}
\label{Example 4}
\end{example} Here we take $\gamma=n$ and $1<p<\infty$. For $k \in \Z$, let $Q_k=[0,2^{-k}]^n$ and consider the function
\[
f=\sum_{k=1}^{\infty}c_k\chi_{Q_k}, \quad c_k = 2^{k\frac{n}{p}}.
\] 
Then
$$\int f =  \sum_{k=1}^{\infty}c_k|Q_k| = \sum_{k = 1}^{\infty}2^{k(\frac{n}{p}-n)} < \infty.$$

First we show that $f$ is in $\JNp(\Rn,\cD)$.  Note that for any disjoint collection of dyadic cubes there is at most one cube on which $f$ is not constant, and that cube must be $Q_k$ for some $k \in \Z$. Therefore it is enough to show that 
\begin{equation}
\label{jnpconditionexample4}
	|Q_k|O(f,Q_k)^p\lesssim1,\quad k \in \Z.
\end{equation}
The trivial estimate of the oscillation by the mean gives  
\begin{equation}
\label{largecubes}
|Q_k|O(f,Q_k)^p \le 2^{-kn(1 - p)+p} \|f\|^p_{\Lone} \lesssim 1, \quad  k \le 0.
\end{equation}
For $k \ge 1$, applying the same estimate to the function $g = f - c_k$, we have that
\begin{equation}
\label{oscillationabove}
O(f,Q_k) = O(g, Q_k) \le 2\fint_{Q_k} \sum_{l=k+1}^{\infty}c_l \chi_{Q_l} = 2^{kn+1}\sum_{l=k+1}^{\infty}2^{l(\frac{n}{p}-n)}\approx 2^{k\frac{n}{p}},
\end{equation}
so \eqref{jnpconditionexample4} holds 
and thus $f\in \JNp(\Rn,\cD)$. 

We can use a similar argument to bound the oscillation of $g$ on $Q_k$  from below.  For $k \ge 1$,
\begin{align}
\label{oscillationbelow}
2O(g,Q_k) &\ge  \fint_{Q_k} \fint_{Q_k} |g(x) - g(y)| dx dy \ge \frac{1}{|Q_k|^2} \int_{Q_k\setminus Q_{k+1}} \int_{Q_{k+1}} |g(x)| dx dy \\
&\ge \frac{|Q_k\setminus Q_{k+1}||Q_{k+1}|}{|Q_k|^2} |c_{k+1} - c_k| = (1 - 2^{-n})2^{-n}|2^{(k+1)(\frac{n}{p})} - 2^{k(\frac{n}{p})}| \approx 2^{k\frac{n}{p}}.
\nonumber
\end{align}
This means $b(f)_{Q_k} = \ell(Q_k)^{\frac{n}{p}}O(f,Q_k) > \lambda_{n,p}\approx 1$, for all $k \in \N$, so 
\[
\lambda_{n,p}^p\sum_{b(f)_Q> \lambda_{n,p}} \ell(Q)^{-n}|Q| = \lambda_{n,p}^p\#\{Q : b(f)_Q>\lambda_{n,p}\} = \infty,
\]
which proves that $\|b(f)\|_{l_n^{p,\infty}(\Rn)} =\infty$.

By modifying the above example to
\[
f=\sum_{k=1}^{\infty}k^{-\frac{\alpha}{p}}2^{k\frac{n}{p}}\chi_{[0,2^{-k}]^n}, \quad 0<\alpha<1,
\]
we obtain a function in $\JNp(\Rn,\cD)$ for which $[b(f)]_{l_n^{p,\infty}(\Rn)}=\infty$.

\begin{example}
\label{Example 5}
\end{example} Here we show  inequality \eqref{thesecond} fails for the case $0<\gamma<n$, $1<p<\infty$. The construction follows the lines of Example~\ref{Example 2}, where we choose a sequence of  finite collections $\cC_k$ of disjoint dyadic cubes in $Q_0 = [0,1]^n$. So let $\left\{n_k\right\}$ be an increasing sequence of natural numbers with
\begin{equation}\label{jnpnumbers}
	q_k:=\lfloor2^{n_k(n-\gamma)}\rfloor, \quad \frac{q_k}{2^{n_k(n-\gamma)}}=1-\varepsilon_k,\quad \varepsilon_k\le2^{-k}, \quad k\in\N, \quad \mbox{and } n_1>\frac{n}{\gamma}.
\end{equation}
The last assumption is meant to guarantee that $2^{n_1(n-\gamma)}<2^{(n_1-1)n}$, which ensures that we can pick $q_1$ different cubes in $\cD_{n_1-1}(Q_0)$ and take a child of each one, giving a collection $\cC_1$ of  $q_1$ cubes in $\cD_{n_1}(Q_0)$, all of which have different parents. We then repeat the process inside each of the selected cubes but this time with $n_2$ and $q_2$, and so on, so at the $k$-th step we have a collection of cubes $\cC_k$ such that
\begin{equation}
\label{lengthofcubesjnp}
	\#\cC_k=q_1\cdots q_k, \quad \ell(Q)=2^{-(n_1+\ldots +n_k)}, \quad Q\in\cC_k,\quad k\in\N.
\end{equation}
Now let 
\[
f=\sum_{k=1}^{\infty}2^{(n_1+ \ldots +n_k)\frac{\gamma}{p}}\sum_{Q\in {\cC_k}}\chi_Q,
\]
and note that 
\[
\int_{Q_0}f=\sum_{k=1}^{\infty}2^{(n_1+ \ldots +n_k)\frac{\gamma}{p}}|\cup\cC_k|=\sum_{k=1}^{\infty}q_1\cdots q_k2^{(\frac{\gamma}{p}-n)(n_1+ \ldots +n_k)}
\le\sum_{k=1}^{\infty}2^{-\frac{\gamma}{p'}(n_1+ \ldots +n_k)}<\infty.
\]

Our next task is to show that $\|b(f)\|_{l_n^{p,\infty}(\Rn)} =\infty$. In order to do so, let $Q\in\cC_k$ and $\Qbar$ be its parent. Note that from the construction the only cube in $\cC_k$ which is contained inside $\Qbar$ is $Q$ itself. Therefore for some constant $c_{\Qbar}$,
\[
f\chi_{\Qbar}=g+c_{\Qbar} \quad g=\sum_{l=k}^{\infty}2^{(n_1+ \ldots +n_l)\frac{\gamma}{p}}\sum_{\substack{Q'\in \cC_l\\Q'\subset Q}}\chi_{Q'},
\]
so as in inequality \eqref{oscillationbelow} of Example~\ref{Example 4} we can replace $f$ by $g$ to get a lower bound on the oscillation:
\begin{align*}
O(f,\Qbar) = O(g,\Qbar) &\ge  
 \frac{|\Qbar \setminus Q|}{2|\Qbar|} \fint_{\Qbar} g \approx \fint_Q g \gtrsim 2^{(n_1+ \ldots +n_k)\frac{\gamma}{p}}.
\end{align*}
This shows, using \eqref{lengthofcubesjnp}, that for some $\lambda \approx 1$, and all $Q \in \cC_k$, $k \in \N$, we have
$$b(f)_{\Qbar} = \ell({\Qbar})^{\frac{\gamma}{p}}O(f,{\Qbar}) > \lambda,$$
so 
\[
\sum_{b_{Q'}(f)>\lambda}\ell(Q')^{n-\gamma}\ge\sum_{k=1}^{\infty}\sum_{Q\in {\cC_k}}\ell(\Qbar)^{n-\gamma}=2^{n-\gamma}\sum_{k=1}^{\infty}2^{-(n-\gamma)(n_1+ \ldots +n_k)}q_1\cdots q_k
=\sum_{k=1}^{\infty}(1-\varepsilon_1)\cdots(1-\varepsilon_k).
\]
Since by \eqref{jnpnumbers} the sum on the right diverges, we have shown $\|b(f)\|_{l_n^{p,\infty}(\Rn)} =\infty$.

Our final goal is to show that $f\in \JNp(\Rn,\cD)$.  To this aim, let $\cP$ be an arbitrary but finite collection of disjoint dyadic cubes.  We can ignore large cubes since only one of those, namely the one containing $Q_0$, will have nonzero oscillation, and that can be estimated as in \eqref{largecubes}.  Thus we may assume $\cP \subset \cD(Q_0)$.  We have to show that
\[
\sum_{Q\in \cP}|Q|O(f,Q)^p\lesssim1,
\]
where the above bound is independent of $\cP$. In order to do this, first we partition $\cP$ into some disjoint subcollections $\cP_k$ as follows. Note that we can remove from $\cP$ the cubes on which $f=0$.  Thus for $Q\in \cP$, if $f$ is not identically $0$, then by the definition of $f$, $Q$ contains a cube in $\cC_k$ for some $k\in \N$.  Let $k_Q$ be the smallest such natural number, and for $k \in \N$ denote $\cP_k = \{Q \in \cP: k_Q = k\}$. 
This gives us a partition of $\cP$ and thus we have
\begin{equation}\label{jnpnormpartion}
	\sum_{Q\in \cP}|Q|O(f,Q)^p=\sum_{k}\sum_{Q\in \cP_k}|Q|O(f,Q)^p.
\end{equation}

Next, for $Q\in \cP_k$ let us estimate the mean oscillation of $f$ on $Q$. First we note that
\[
f\chi_Q=g+\text{const}, \quad g=\sum_{\substack{Q'\subset Q\\Q'\in\cC_k}}\sum_{j=k}^{\infty}2^{(n_1+ \ldots +n_j)\frac{\gamma}{p}}\sum_{\substack{Q''\subset Q'\\Q''\in\cC_j}}\chi_{Q''},
\]
which implies, as in \eqref{oscillationabove}, that
\[
O(f,Q)\lesssim \fint_{Q}{g}\le\frac{1}{|Q|}\sum_{\substack{Q'\subset Q\\Q'\in\cC_k}}\sum_{j=k}^{\infty}2^{(n_1+ \ldots +n_j)\frac{\gamma}{p}} \Big|\bigcup\limits_{\substack{Q''\subset Q'\\Q''\in\cC_j}}Q'' \Big|.
\]
Now for each $Q'\in\cC_k$ occurring in the first sum, we have, from the construction and after applying \eqref{lengthofcubesjnp} and \eqref{jnpnumbers}, the following bound for the second sum:
\begin{align*}
&\sum_{j=k}^{\infty}2^{(n_1+ \ldots +n_j)\frac{\gamma}{p}} \Big|\bigcup\limits_{\substack{Q''\subset Q'\\Q''\in\cC_j}}Q'' \Big|
 = 2^{(\frac{\gamma}{p}-n)(n_1+ \ldots +n_k)}+\sum_{j=k+1}^{\infty}2^{(\frac{\gamma}{p}-n)(n_1+ \ldots +n_j)}q_{k+1}\cdots q_j\\
&\le 2^{(\frac{\gamma}{p}-n)(n_1+ \ldots +n_k)} 
+ 2^{(\frac{\gamma}{p}-n)(n_1+ \ldots +n_k)}\sum_{j=k+1}^{\infty}2^{-\frac{\gamma}{p'}(n_{k+1}+ \ldots +n_j)}(1-2^{-k-1})\cdots (1 -2^{-j})\\
& \lesssim 2^{(\frac{\gamma}{p}-n)(n_1+ \ldots +n_k)}.
\end{align*}
This gives
$$O(f,Q) \lesssim  \frac{\#\{Q' \in \cC_k: Q'\subset Q\}}{|Q|} 2^{(\frac{\gamma}{p}-n)(n_1+ \ldots +n_k)} = 2^{(n_1+ \ldots +n_k)\frac{\gamma}{p}} \fint_Q \sum_{Q'\in\cC_k}\chi_{Q'}.$$
Now, using the fact that the cubes in $\cP_k$ are disjoint, we can bound the inner sum in \eqref{jnpnormpartion} by
\begin{align}
\nonumber
\sum_{Q\in \cP_k}|Q|O(f,Q)^p & \lesssim 2^{(n_1+ \ldots +n_k)\gamma}\sum_{Q\in \cP_k}|Q|\Big(\fint_Q \sum_{Q'\in\cC_k}\chi_{Q'}\Big)^p\\
\nonumber
& \le 2^{(n_1+ \ldots +n_k)\gamma}\sum_{Q\in \cP_k} \int_Q \sum_{Q'\in\cC_k}\chi_{Q'}
= 2^{(n_1+ \ldots +n_k)\gamma} \int_{\cup \cP_k} \sum_{Q'\in\cC_k}\chi_{Q'}\\
&=2^{(n_1+ \ldots +n_k)(\gamma-n)}\#\{Q' \in \cC_k: Q'\subset\cup\cP_k\},
\label{numberboundforel}
\end{align}
where in the last equality \eqref{lengthofcubesjnp} is used. At this point let us introduce the notation
\begin{equation}\label{definitionofa'}
	q'_k=\#\{Q' \in \cC_k: Q'\subset\cup\cP_k\},
\end{equation}
which together with \eqref{jnpnumbers}, \eqref{jnpnormpartion}, \eqref{numberboundforel} and \eqref{definitionofa'} gives
\begin{equation}\label{finalboundforel}
	\sum_{Q\in \cP}|Q|O(f,Q)^p\lesssim \sum_{k}\frac{q'_k}{q_1\cdots q_k},\quad k\in \N.
\end{equation}

We bound the numbers $q'_l$ as follows. First we note that $q'_1\le q_1$. Then since cubes in $\cP_2$ do not contain any cube from $\cC_1$, the total number of cubes from $\cC_2$ that can possibly be found in $\cup\cP_2$ is $(q_1-q'_1)q_2$, and the same reasoning shows that
\[
q'_3\le\left((q_1-q'_1)q_2-q'_2\right)q_3.
\]
Then by induction we get
\begin{equation}
	\frac{q'_k}{q_1\cdots q_k}\le1-\sum_{j=1}^{k-1}\frac{q'_j}{q_1\cdots q_j},\quad k\in \N,
\end{equation}
and this together with \eqref{finalboundforel} shows that
\[
\sum_{Q\in \cP}|Q|O(f,Q)^p\lesssim1.
\]
We can now take the supremum over all $\cP$ to see that  $\|f\|_{\JNp(\Rn,\cD)} \lesssim 1$.

Similarly to the previous example one can check that the function
\[
f=\sum_{k=1}^{\infty}k^{-\frac{\alpha}{p}}2^{(n_1+ \ldots +n_k)\frac{\gamma}{p}}\sum_{Q\in {\cC_k}}\chi_Q,\quad 0<\alpha<1
\]
is in $\JNp(\Rn,\cD)$ but $[b(f)]_{l_\gamma^{p,\infty}(\Rn)}=\infty$.

\begin{claim}
\label{Claim4}
The norm $\|b(f)\|_{l_\gamma^{p,\infty}(\Rn)}$ is not strong enough to imply any higher integrability.  In particular all the inequalities of the form
\begin{equation}
\label{lqembedding}
	\|f\|_{L^{q,\infty}}\le C\|b(f)\|_{l_\gamma^{p,\infty}}, \quad 1<p,q<\infty
\end{equation}
fail to hold.  Moreover, the inequality fails even if the norm on the right-hand side is replaced by $\|b(f)\|_{L^{p,\infty}(\Rnp, \nu_{\gamma})}$.
\end{claim}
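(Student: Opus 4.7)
The plan is to exhibit a single function $F\colon\Rn\to\mathbb{R}$ that simultaneously refutes \eqref{lqembedding} for every $1<p,q<\infty$, and that also refutes the stronger version with $\|b(f)\|_{L^{p,\infty}(\Rnp,\nu_\gamma)}$ on the right-hand side. The simplest such choice is a nonzero constant, which I take to be $F\equiv 1$.

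The verification proceeds in two steps. First, since $F_Q = F_B = 1$ for every cube $Q$ and every ball $B$, the mean oscillations $O(F,Q)$ and $O(F,B(x,t))$ vanish identically; hence $b(F)_Q\equiv 0$ on $\cD$ and $b(F)(x,t)\equiv 0$ on $\Rnp$, so both $\|b(F)\|_{l_\gamma^{p,\infty}(\Rn)}$ and $\|b(F)\|_{L^{p,\infty}(\Rnp,\nu_\gamma)}$ are equal to zero. Second, the level set $\{F>\lambda\}$ equals $\Rn$ for every $0<\lambda<1$, so $\lambda|\{F>\lambda\}|^{1/q}=+\infty$, and therefore $\|F\|_{L^{q,\infty}(\Rn)}=+\infty$ for every $q\in(1,\infty)$.

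Plugging $F$ into \eqref{lqembedding}, or into its half-space variant, yields a relation of the form $\infty\le C\cdot 0$, which is false for every finite $C$; and since $F$ depends on none of the parameters $p$, $q$, $\gamma$, this disproves all such inequalities at once. The structural reason, which is also the only conceptual content of the claim, is that $b$ is invariant under addition of constants while $L^{q,\infty}(\Rn)$ is not. Because the proof reduces to this one observation, there is no real obstacle to overcome; I note only that the same trick would not work on a bounded cube $Q_0$ in place of $\Rn$, where constants have finite $L^{q,\infty}(Q_0)$, and a more delicate construction (for instance, a modification of Example~\ref{Example 3} with appropriately tuned scaling) would be required there.
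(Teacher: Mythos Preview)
Your argument is formally correct for the statement exactly as written: $F\equiv 1$ has $b(F)\equiv 0$ in both the dyadic and continuous senses, while $\|F\|_{L^{q,\infty}(\Rn)}=\infty$, so no finite constant $C$ can work. This is valid and disposes of \eqref{lqembedding} and its half-space variant simultaneously.

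However, your approach is fundamentally different from the paper's, and much weaker in content. You exploit only the trivial degeneracy that $b(\cdot)$ is a semi-norm vanishing on constants while $\|\cdot\|_{L^{q,\infty}(\Rn)}$ is not; as you yourself note, this says nothing about what happens on a bounded cube, nor about the inequality modulo constants. The paper's Example~\ref{Example 6} proves something genuinely stronger: it constructs a family of \emph{compactly supported} functions $f_M$ (sums of indicators of well-separated dyadic intervals) with $\|b(f_M)\|_{l_\gamma^{p,\infty}}\lesssim 1$ uniformly in $M$, yet $\|f_M\|_{L^{q,\infty}}\gtrsim M^{(q-1)(p'-1)}\to\infty$. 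Since each $f_M$ is supported in a bounded set and lies in $L^1$, subtracting a constant cannot rescue the inequality, and the failure is not an artifact of infinite measure. The paper then modifies this to a Lipschitz version $\tilde f$ to handle arbitrary dyadic systems and hence the continuous norm via \eqref{continousdyadic}. What the paper's route buys is the substantive conclusion that finiteness of $\|b(f)\|_{l_\gamma^{p,\infty}}$ gives no control on the distribution function of $f$ even after quotienting out constants --- which is the intended meaning of ``not strong enough to imply any higher integrability.'' Your route buys brevity, but only establishes the literal inequality fails, not the underlying phenomenon.
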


\begin{example}
\label{Example 6}
\end{example} Working in dimension $1$, let $\cC \subset \cD(\R)$ be a collection of intervals of bounded sidelength,  and $\{a_I\}_{I\in\cC}$ be a sequence of positive numbers indexed by $\cC$. To this data we associate a function $f$, and a quantity $A$ by writing
\begin{equation}
\label{definitionoff}
	f=\sum_{I\in\cC}a_I\chi_I, \quad A^p=\sup_{\lambda>0}\lambda^p\sum_{\substack{a_I\ell(I)^{\frac{\gamma}{p}}>\lambda\\I\in\cC}}\ell(I)^{1-\gamma}.
\end{equation}

From \eqref{definitionoff} we get the inequality
\[
A\lesssim \|b(f)\|_{l_\gamma^{p,\infty}}
\]
by noting that for $I\in\cC$ with parent $\Ibar$,
\[
O(f,\Ibar)=a_IO(\chi_I,\Ibar)=\frac{1}{2}a_I,
\]
or equivalently
\[
b(f)_{\Ibar}\approx a_I\ell(I)^\frac{\gamma}{p}, \quad I \in \cC.
\]

We will show that if the intervals in $\cC$ are placed far from each other in the sense that for a sufficiently large number $N$, which depends on the data, the $N$-th degree ancestors of intervals in $\cC$ are pairwise disjoint, then we also have the reverse inequality, i.e.\
\begin{equation}
\label{realabstract}
	\|b(f)\|_{l_\gamma^{p,\infty}} \approx A, \quad 1<p<\infty, \quad \gamma\ne0,
\end{equation}
where in the above, the implied constants only depend on $p$ and $\gamma$.

To see this, note that the function $f$ is not constant on a dyadic interval $J$, exactly when it is an ancestor of one the intervals in $\cC$. From the choice of $N$, for $1\le k\le N$ and a fixed interval $I$, its $k$-th degree ancestor, denoted $I^k$,  does not contain any other intervals in $\cD$, and we can use the trivial estimate of the oscillation by the mean to get
\begin{equation}\label{smallparent}
	b(f)_{I^k}\le 2\ell(I^k)^{-1+\frac{\gamma}{p}}\int_{I^k}f\le 2 a_I \ell(I)^{\frac{\gamma}{p}}2^{k(\frac{\gamma}{p}-1)}, \quad 1\le k\le N.
\end{equation}
If $k>N$ we use the same estimate to get
\begin{equation}\label{largeparent}
	b(f)_{I^k}\le 2\ell(I^k)^{-1+\frac{\gamma}{p}}\int f\le2 c(f)d(\cC)2^{k(-1+\frac{\gamma}{p})},
\end{equation}
where $c(f) := \|f\|_{\Lone}$, $d(\cC) := \max\{\ell(I)^{-1+\frac{\gamma}{p}}: I \in \cC\}$. 
Thus for a fixed $\lambda>0$ we have
\[
\lambda^p\sum_{b(f)_J>\lambda}\ell(J)^{1-\gamma}\le  \lambda^p\left(\sum_{1\le k\le N }+\sum_{k>N }\right)\sum_{\substack{b(f)_{I^k}>\lambda\\I\in\cC}}\ell(I^k)^{1-\gamma}=(i)+(ii).
\]
For the first sum, using \eqref{definitionoff} and \eqref{smallparent} gives us
\[
(i)\le \lambda^p\sum_{1\le k\le N }2^{k(1-\gamma)}\sum_{\substack{a_I \ell(I)^{\frac{\gamma}{p}}>2^{k(\frac{-\gamma}{p}+1)-1}\lambda\\I\in\cC}}\ell(I)^{1-\gamma}\lesssim A^p\sum_{1\le k\le N }2^{k(1-p)}\lesssim A^p,
\]
and for the second sum, \eqref{largeparent} implies that
\[
(ii)\le \sum_{k>N}\sum_{\substack{\lambda < 2c(f)d(\cC)2^{k(-1+\frac{\gamma}{p})}\\I\in\cC}} \lambda^p\ell(I^k)^{1-\gamma}
\le2^p c(f)^p d'(\cC) \sum_{k>N} 2^{k(1-p)}\le C(f,\cC,\gamma,p) 2^{N(1-p)}
\]
where $d'(\cC) := d(\cC)^{p}\max\{\ell(I)^{1-\gamma}: I \in \cC\}$. Putting these two bounds together we obtain
\[
\lambda^p\sum_{b(f)_J>\lambda}\ell(J)^{1-\gamma}\lesssim A^p+C(f,\cC,\gamma,p) 2^{N(1-p)}, \quad \lambda>0.
\]
which shows that if $N$ is large enough \eqref{realabstract} holds.  It is important to point out here that the data involved in $C(f,\cC,\gamma,p)$, namely the sizes of the $a_I$ and the lengths of the intervals $I \in \cC$, are independent of the placement of the intervals, determined by the choice of $N$.

Now to disprove \eqref{lqembedding}, fix a natural number $M$ and choose the collection $\cC$ as follows. For each $1\le k\le M$, choose $N_k$ dyadic intervals with length $\ell_k$, where these numbers are determined by
\[
2^{\frac{kp'}{\gamma}}k^{-p'} \le N_k < 2^{\frac{kp'}{\gamma}}k^{-p'} + 1, \quad 2^{-\frac{kp'}{\gamma}} \le \ell_k <2^{-\frac{kp'}{\gamma}+1}.
\]
For each such interval $I$, let 
$$a_I = k^{p'-1},$$
and let $f_M$ be the function defined in \eqref{definitionoff}.  As shown above, given $f_M$, it is possible to assume that all these intervals are placed far from each other so that \eqref{realabstract} holds independently of $M$. 

We will show that given a constant $C$, one can choose $M$ large enough so that \eqref{lqembedding} fails. 
To this aim we bound the quantity $A$ uniformly in $M$. Fixing $\lambda>0$, we write
\[
\sum_{a_I\ell(I)^{\frac{\gamma}{p}}>\lambda}\ell(I)^{1-\gamma}\le \sum_{(2^{-k}k)^{p'-1}>\lambda}(2^{k}k^{-1})^{p'}
\approx(2^{l}l^{-1})^{p'},
\]
where in the above, $l$ is the largest number $k$ in the set over which we are summing. This is a consequence of the geometric growth of the terms in the above sum. From this we can see that 
\[
(2^{-l}l)^{p'-1}\approx\lambda,
\]
and therefore, recalling that $p' - 1 = \frac{p'}{p}$,
\[
\sum_{a_I\ell(I)^{\frac{\gamma}{p}}>\lambda}\ell(I)^{1-\gamma}\lesssim \lambda^{-p},
\]
which together with \eqref{realabstract} proves that
\begin{equation}
\label{bfM}
\|b(f_M)\|_{l_\gamma^{p,\infty}}\lesssim 1
\end{equation}
with constants independent of $M$.

Next, let us estimate the size of the level sets of the function $f_M$. Take $m = \lfloor \frac M 2\rfloor$ and set $\lambda=m^{p'-1}$.  Then by the definitions of $a_I$, $N_k$ and $\ell_k$, we have that
\[
|\left\{f>\lambda\right\}| \ge \sum_{I \in \cC, a_I > \lambda} \ell(I) = \sum_{1 \le k \le M, \; k^{p'-1} >  m^{p'-1}} N_k \ell_k
\geq
\sum_{m<k\le M}k^{-p'}\approx M^{1-p'}=\lambda^{-1}.
\]
Hence for any $q>1$ we have
\begin{equation}
\label{biglqnorm}
	\|f_M\|_{L^{q,\infty}}\gtrsim M^{(q-1)(p'-1)}.
\end{equation}
Taking $M$ sufficiently large, \eqref{bfM} and \eqref{biglqnorm} show that \eqref{lqembedding} fails to hold.

Finally, we show that even if we replace the dyadic norm $\|b(f)\|_{l_\gamma^{p,\infty}(\Rn)}$ by the continuous norm $\|b(f)\|_{L^{p,\infty}(\Rnp, \nu_{\gamma})}$ in \eqref{lqembedding},  the inequality 
still fails. Let us modify the above function as
\[
\ftil=\sum_{I\in\cC} a_I\tilde{\chi}_I,
\]
where in the above $\tilde{\chi}_I$ is a non-negative Lipschitz function supported on $I$ with Lipschitz constant $2\ell(I)^{-1}$, and is equal to 1 on $\frac{1}{2}I$ . Also, this time we place the intervals in $\cC$ in such a way that for large $N$, the dilations $5^NI$ are disjoint for different $I\in \cC$. Then it follows from \eqref{continousdyadic} that it is enough to show that for any dyadic system $\cDtil$ we have 
\begin{equation}
\label{bftil}
\|b(\ftil)\|_{l_\gamma^{p,\infty}(\Rn, \cDtil)}\lesssim A,
\end{equation}
where $A$ is the same as \eqref{definitionoff}. 

The first thing to note is that each interval $I \in \cC$ is covered by at most three intervals $I_1$, $I_2$, and $I_3$ in $\cDtil$ with $\ell(I)/2 < \ell(I_i) \le \ell(I)$. If an interval $J\in \cDtil$ intersects $I$ we must have either $J\subset I_i$ or $J$ is an ancestor of $I_i$ for some $i\in \{1,2,3\}$. Furthermore, for $N \ge 1$, the $N$th ancestor of $I_i$ satisfies $I^N_i\subset 5^NI$, so by the choice of $N$ described above we conclude that for two distinct intervals $I,I'\in\cC$ and their corresponding $I_i,I'_i \in \cDtil$, $i=1,2,3$, the $N$-th ancestors of $I_i$ and $I'_i$. are disjoint. 

Next, fix $\lambda>0$ and note that for intervals of the form $I^k_i$ the estimates \eqref{smallparent} and \eqref{largeparent} remains unchanged, so we only need to estimate the contribution of the intervals $J$ such that for an interval $I\in\cC$, $J\subset I_i$ for some $i\in \{1,2,3\}$. For such an interval, by using the Lipschitz continuity of $\tilde{\chi}_I$, we have
\[
O(\ftil,J)=a_IO(\tilde{\chi}_I,J)\le a_I\frac{2\ell(J)}{\ell(I)},
\]
which implies that for $J\subset I_i$ with $\ell(J)=2^{-k}\ell(I_i)$ we have
\[
b(\ftil)_J = \ell(J)^{\frac{\gamma}{p}}O(\ftil,J) \lesssim 2^{-k(1+\frac{\gamma}{p})}a_I\ell(I)^{\frac{\gamma}{p}}.
\]
To estimate the contribution of the intervals $J \in \cDtil$ of this form to the norm, for $\lambda > 0$ write
\[
\sum_{b(\ftil)_J>\lambda}\ell(J)^{1-\gamma}
\le \sum_{i=1}^{3}\sum_{k\ge0}\sum_{\substack{a_I\ell(I)^{\frac{\gamma}{p}}\gtrsim2^{k(1+\frac{\gamma}{p})}\lambda\\\ell(J)=2^{-k}\ell(I_i)\\I\in\cC}}\ell(J)^{1-\gamma}\le 3\sum_{k\ge0}2^{k\gamma}\sum_{\substack{a_I\ell(I)^{\frac{\gamma}{p}}\gtrsim2^{k(1+\frac{\gamma}{p})}\lambda\\I\in\cC}}\ell(I)^{1-\gamma},
\]
which after using the definition of $A$ in \eqref{definitionoff} gives us
\[
\lambda^p\sum_{b(\ftil)_J>\lambda}\ell(J)^{1-\gamma}\lesssim \lambda^p A^p \sum_{k\ge0}2^{k\gamma}(2^{k(1+\frac{\gamma}{p})}\lambda)^{-p}
=
A^p\sum_{k\ge0}2^{-kp}\lesssim A^p,
\]
proving \eqref{bftil}. Finally, since $\tilde{\chi}_I$ is equal to $1$ on half of $I\in\cC$, we conclude that \eqref{biglqnorm} holds true for $\ftil$ as well.  As before, taking $M$ sufficiently large, this shows that \eqref{lqembedding} fails for the continuous norm as well.

\bibliographystyle{amsplain}

\end{document}